\documentclass[11pt]{article}
\usepackage{amssymb}
\usepackage{amsmath} 
\usepackage{mathrsfs}
\usepackage{graphics}
\usepackage{graphicx}
\usepackage{xcolor}
\usepackage{subfigure}
\usepackage[T1]{fontenc}
\usepackage{latexsym,amssymb,amsmath,amsfonts,amsthm}\usepackage{txfonts}
\topmargin =0mm \headheight=0mm \headsep=0mm \textheight =220mm
\textwidth =160mm \oddsidemargin=0mm\evensidemargin =0mm
\sloppy \brokenpenalty=10000

\newcommand{\be}{\begin{eqnarray}}
\newcommand{\ben}{\begin{eqnarray*}}
\newcommand{\en}{\end{eqnarray}}
\newcommand{\enn}{\end{eqnarray*}}

\newtheorem{theorem}{Theorem}[section]
\newtheorem{lemma}{Lemma}[section]
\newtheorem{prp}[theorem]{Proposition}
\newtheorem{thm}[theorem]{Theorem}
\newtheorem{cor}[theorem]{Corollary}
\newtheorem{dfn}{Definition}[section]

\newtheorem{remark}{Remark}

\definecolor{rr}{rgb}{0,0,0}
\definecolor{r}{rgb}{0,0,0}
\begin{document}
\renewcommand{\theequation}{\arabic{section}.\arabic{equation}}
\begin{titlepage}
\title{\bf  On the small time asymptotics of scalar stochastic conservation laws
}
\author{Zhao Dong$^{1,2}$\ \ Rangrang Zhang$^{3,}\footnote{Corresponding author.}$\\
{\small $^1$
RCSDS, Academy of Mathematics and Systems Science, Chinese Academy of Sciences, Beijing 100190, China.} \\
{\small $^2$ School of Mathematical Sciences, University of Chinese Academy of Sciences.}\\
{\small $^3$ School of Mathematics and Statistics,
Beijing Institute of Technology, Beijing, 100081, China.}\\
({\sf dzhao@amt.ac.cn}, {\sf rrzhang@amss.ac.cn} )}
\date{}
\end{titlepage}
\maketitle

\noindent\textbf{Abstract}:
In this paper, we established a small time large deviation principles for scalar stochastic conservation laws driven by multiplicative noise. The doubling variables method plays a key role.

\noindent \textbf{AMS Subject Classification}:\ \ 60F10, 60H15, 60G40

\noindent\textbf{Keywords}: small time asymptotic; large deviations; scalar stochastic conservation laws

\section{Introduction}
In this paper, we investigate the small time asymptotics of the first-order scalar conservation laws with stochastic forcing.
 Precisely, fix any
$T>0$ and let $(\Omega,\mathcal{F},\mathbb{P},\{\mathcal{F}_t\}_{t\in
[0,T]},(\{\beta_k(t)\}_{t\in[0,T]})_{k\in\mathbb{N}})$ be a stochastic basis. Without loss of generality, here the filtration $\{\mathcal{F}_t\}_{t\in [0,T]}$ is assumed to be complete and $\{\beta_k(t)\}_{t\in[0,T]}(k \in\mathbb{N})$ are one-dimensional i.i.d. real-valued  $\{\mathcal{F}_t\}_{t\in [0,T]}-$Wiener processes. The symbol $\mathbb{E}$ denotes the expectation with respect to $\mathbb{P}$.
For any fixed $N\in\mathbb{N}$, let $\mathbb{T}^N\subset\mathbb{R}^N$ be the $N-$dimensional torus with the periodic length to be $1$.
We are concerned with the following Cauchy problem for the scalar conservation laws with stochastic forcing
\begin{eqnarray}\label{P-19}
\left\{
  \begin{array}{ll}
  du+div(A(u))dt=\Phi(u) dW(t) \quad {\rm{in}}\ \mathbb{T}^N\times (0,T),\\
u(\cdot,0)=\eta(\cdot) \quad {\rm{on}}\ \mathbb{T}^N,
  \end{array}
\right.
\end{eqnarray}
where $u:(\omega,x,t)\in\Omega\times\mathbb{T}^N\times[0,T]\mapsto u(\omega,x,t):=u(x,t)\in\mathbb{R}$ is a random field, the flux function $A:\mathbb{R}\to\mathbb{R}^N$ and the coefficient $\Phi:\mathbb{R}\to\mathbb{R}$ are measurable and fulfill certain conditions (see Section 2 in below), and $W$ is a cylindrical Wiener process defined on a given (separable) Hilbert space $U$ with
the form $W(t)=\sum_{k\geq 1}\beta_k(t) e_k,t\in[0,T]$, where {\color{r}$\{e_k\}_{k\geq 1}$ is an orthonormal base} of the Hilbert space $U$. Moreover, the initial value {\color{r}$\eta\in L^{\infty}(\mathbb{T}^N)$} is a deterministic function.

When $\Phi\equiv0$, the system (\ref{P-19}) is reduced to the deterministic scalar conservation law, which is fundamental to our understanding of the space-time evolution laws of interesting physical quantities. For more background on this model, we refer the readers to
the monograph \cite{Dafermos}, the work of {\color{r}Ammar,
Wittbold and Carrillo} \cite{K-P-J} and references therein. As we know, the Cauchy problem
for the deterministic first-order PDE (\ref{P-19}) does not admit any (global) smooth solutions, but there exist infinitely many weak solutions to the deterministic Cauchy problem. To solve the problem of non-uniqueness, an additional entropy condition was added to identify the physical weak solution. Under this condition,
the notion of entropy solutions for the deterministic first-order scalar conservation laws was introduced by Kru\v{z}kov \cite{K-69,K-70}.
The kinetic formulation of weak entropy solution of the Cauchy problem for a general multi-dimensional scalar conservation laws (also called the kinetic system), was derived by Lions, Perthame and Tadmor in \cite{L-P-T}. The authors of \cite{L-P-T} also discussed the relationship between entropy solutions and the kinetic system.

Adding a stochastic forcing (i.e., a noise) to this physical model is quite natural as it either represents an external random perturbation or gives a remedy for lack of (empirical) knowledge of certain involved physical parameters.
Along with the great successful developments of deterministic
scalar conservation laws, the random situation has also been developed rapidly.
 For example, in \cite{K}, Kim studied the Cauchy problem for the scalar stochastic conservation laws (\ref{P-19}) driven by additive noise. Later, these results were extended to the multi-dimensional Dirichlet problem with additive noise by Vallet and Wittbold in \cite{V-W}. The authors of \cite{V-W} succeed to show the existence and uniqueness of the stochastic entropy solutions by utilising the vanishing viscosity method, Young measure techniques, and Kru\v{z}kov doubling variables technique. Concerning the multiplicative noise, for the Cauchy problem over the whole spatial space, Feng and Nualart \cite{F-N} introduced a notion of strong entropy solutions to prove the uniqueness of the entropy solution. Moreover, the authors in \cite{F-N} established the existence of strong entropy solutions in one dimensional case by using the vanishing viscosity and compensated compactness method.
 Recently, Debussche and Vovelle \cite{D-V-1} proved the existence and uniqueness of kinetic solution to the Cauchy problem for (\ref{P-19}) in any dimension by utilizing a kinetic formulation developed by Lions, Perthame and Tadmor for deterministic first-order scalar conservation laws in \cite{L-P-T}. Due to the equivalence between kinetic formulation and entropy solution, the existence and uniqueness of the entropy solutions were obtained in \cite{D-V-1}. {\color{r}It is worth mentioning that \cite{D-V-1} is the starting point of the present paper.}
 In addition, the long-time behavior of the first-order scalar conservation laws has also attracted a lot of interests. For example, Debussche and Vovelle established the existence and uniqueness of invariant measures of scalar conservation laws driven by
  additive stochastic forcing in \cite{D-V-2}. Concretely, for sub-cubic fluxes, the authors of \cite{D-V-2} show the existence of an invariant measure, and for sub-quadratic fluxes, they proved the uniqueness of the invariant measure.
Recently, combining techniques used in the context of kinetic solutions as well as new results on large deviations, Dong et al. \cite{DWZZ} established Freidlin-Wentzell's type large deviation principles (LDP) for the kinetic solution to the scalar stochastic conservative laws.

\smallskip
 The purpose of this paper is to investigate the small time LDP of the kinetic solution to the scalar stochastic conservation laws, which describes the behaviors of the solution at a very small time. Specifically, we focus on the limiting behavior of the kinetic solution to the scalar stochastic conservation laws in a time interval $[0,t]$ as $t$ goes to zero. An important motivation for such a problem comes from Varadhan identity
\begin{eqnarray*}
\lim_{t\rightarrow 0}2t\log \mathbb{P}\big(u(0)\in B,\ u(t)\in C\big)=-d^2(B,C),
\end{eqnarray*}
where $u$ is the kinetic solution to the scalar stochastic conservation laws and $d$ is an appropriate Riemann distance associated with the diffusion generated by $u$.
The mathematical study of the small time LDP for finite dimensional processes was initiated by Varadhan \cite{V}. Since then, the cases for the infinite dimensional diffusion processes were extensively studied (see \cite{A-K, A-Z, F-Z, H-R, ZTS} and the references therein).
On the other hand, many researchers have also studied the small time LDP for infinite dimensional stochastic partial differential equations. For instance, Xu and Zhang \cite{X-Z} established the small time LDP of 2D Navier-Stokes equations in the state space $C([0,T];H)$. Dong and Zhang \cite{D-Z} proved the small time LDP of 3D stochastic primitive equations in the state space  $C([0,T];H^1)$. In this paper, we will prove that the small time LDP of the kinetic solution to the scalar stochastic conservation laws holds in the space $L^1([0,T];L^1(\mathbb{T}^N))$. To our knowledge, the present paper is the first work towards proving the small time LDP directly for the kinetic solution to the scalar stochastic conservation laws.
Due to the fact that the kinetic solutions are living in a rather irregular space, we will use the doubling variables method as in the work of Debussche and Vovelle \cite{D-V-1}. Our new contribution is the estimation of martingale terms and error terms, which are highly nontrivial.

The rest of the paper is organized as follows. In Section 2, we recall the mathematical formulation of scalar stochastic conservation laws. In Section 3, we introduce the small time asymptotics and state our main result. Section 4 is devoted to the proof of exponential equivalence.

\section{Framework}
In the following, we will follow closely the framework of \cite{D-V-1} to
 introduce some notations.
Let $\|\cdot\|_{L^p}$ denote the norm of usual Lebesgue space $L^p(\mathbb{T}^N)$ for $p\in [1,\infty]$. In particular, set $H=L^2(\mathbb{T}^N)$ with the corresponding norm $\|\cdot\|_H$.
  $C_b$ represents the space of bounded, continuous functions and $C^1_b$ stands for the space of bounded, continuously differentiable functions having bounded first order derivative. Define the function $f(x,t,\xi):=I_{u(x,t)>\xi}$, which is the characteristic function of the subgraph of $u$. We write $f:=I_{u>\xi}$ for short.
 Moreover, denote by the brackets $\langle\cdot,\cdot\rangle$ the duality between $C^{\infty}_c(\mathbb{T}^N\times \mathbb{R})$ and the space of distributions over $\mathbb{T}^N\times \mathbb{R}$. In what follows, with a slight abuse of the notation $\langle\cdot,\cdot\rangle$, we denote the following integral by
\[
\langle F, G \rangle:=\int_{\mathbb{T}^N}\int_{\mathbb{R}}F(x,\xi)G(x,\xi)dxd\xi, \quad F\in L^p(\mathbb{T}^N\times \mathbb{R}), G\in L^q(\mathbb{T}^N\times \mathbb{R}),
\]
where $1\leq p\leq +\infty$, $q:=\frac{p}{p-1}$ is the conjugate exponent of $p$. In particular, when $p=1$, we set $q=\infty$ by convention. For a measure $m$ on the Borel measurable space $\mathbb{T}^N\times[0,T]\times \mathbb{R}$, the shorthand $m(\phi)$ is defined by
\[
m(\phi):=\langle m, \phi \rangle([0,T]):=\int_{\mathbb{T}^N\times[0,T]\times \mathbb{R}}\phi(x,t,\xi)dm(x,t,\xi), \quad  \phi\in C_b(\mathbb{T}^N\times[0,T]\times \mathbb{R}).
\]
In the sequel,  the notation $a\lesssim b$ for $a,b\in \mathbb{R}$  means that $a\leq \mathcal{D}b$ for some constant $\mathcal{D}> 0$ independent of any parameters.

\subsection{Hypotheses}
For the flux function $A$ and the coefficient $\Phi$ of (\ref{P-19}), we assume that
\begin{description}
  \item[\textbf{Hypothesis H}] The flux function $A$ belongs to $C^2(\mathbb{R};\mathbb{R}^N)$ and its derivative $a:=A'$ is polynomial growth with degree $q_0>1$. That is, there exists a constant $\mathcal{N}(q_0)\geq0$ such that
     \begin{eqnarray}\label{qeq-22}
    |a(\xi)|\leq \mathcal{N}(q_0)(1+|\xi|^{q_0}),\quad |a(\xi)-a(\zeta)|\leq \Upsilon(\xi,\zeta)|\xi-\zeta|,
      \end{eqnarray}
      where $\Upsilon(\xi,\zeta):=\mathcal{N}(q_0)(1+|\xi|^{q_0-1}+|\zeta|^{q_0-1})$.

      For each $u\in \mathbb{R}$, the map $\Phi(u): U\rightarrow H$ is defined by $\Phi(u) e_k=g_k(\cdot, u)$, where $(e_k)_{k\geq 1}$ is the orthonormal base in the Hilbert space $U$ and each $g_k(\cdot,u)$ is a regular function on $\mathbb{T}^N$.
      More precisely, we assume that $g_k\in C(\mathbb{T}^N\times \mathbb{R})$ satisfies the following bounds
\begin{eqnarray}\label{e-5}
|g_k(x,u)|&\leq& C^0_k(1+|u|), \quad \sum_{k\geq 1}|C^0_k|^2\leq \frac{D_0}{2},\\
\label{e-6}
|g_k(x,u)-g_k(y,v)|&\leq& C^1_k(|x-y|+|u-v|),\quad \sum_{k\geq 1}|C^1_{k}|^2\leq \frac{D_1}{2},
\end{eqnarray}
for $x, y\in \mathbb{T}^N, u,v\in \mathbb{R}$, where $C^0_k, C^1_k, D_0, D_1$ are positive constants.
\end{description}
From (\ref{e-5}) and (\ref{e-6}), we deduce that
\begin{eqnarray}\label{equ-28}
G^2(x,u)&:=&\sum_{k\geq 1}|g_k(x,u)|^2\leq D_0(1+|u|^2),\\
\label{equ-29}
\sum_{k\geq 1}|g_k(x,u)-g_k(y,v)|^2&\leq& D_1\Big(|x-y|^2+|u-v|^2\Big).
\end{eqnarray}
Based on the above notations, equation (\ref{P-19}) can be rewritten as
\begin{eqnarray}\label{P-19-1}
\left\{
  \begin{array}{ll}
  du(t,x)+div A(u(t,x))dt=\sum_{k\geq 1}g_k(x,u(t,x)) d\beta_k(t) \quad {\rm{in}}\ \mathbb{T}^N\times (0,T],\\
u(\cdot,0)=\eta(\cdot) \quad {\rm{on}}\ \mathbb{T}^N.
  \end{array}
\right.
\end{eqnarray}

\subsection{Kinetic solution}
Let us recall the notion of a kinetic solution to equation (\ref{P-19}) from \cite{D-V-1}. Keeping in mind that we are working on the stochastic basis $(\Omega,\mathcal{F},\mathbb{P},\{\mathcal{F}_t\}_{t\in [0,T]},(\beta_k(t))_{k\in\mathbb{N}})$.
\begin{dfn}(Kinetic measure)\label{dfn-3}
 A map $m$ from $\Omega$ to the set of non-negative, finite measures over $\mathbb{T}^N\times [0,T]\times\mathbb{R}$ is said to be a kinetic measure, if
\begin{description}
  \item[1.] $ m $ is measurable, that is, for each $\phi\in C_b(\mathbb{T}^N\times [0,T]\times \mathbb{R}), \langle m, \phi \rangle: \Omega\rightarrow \mathbb{R}$ is measurable,
  \item[2.] $m$ vanishes for large $\xi$, i.e.,
\begin{eqnarray}\label{equ-37}
\lim_{R\rightarrow +\infty}\mathbb{E}[m(\mathbb{T}^N\times [0,T]\times B^c_R)]=0,
\end{eqnarray}
where $B^c_R:=\{\xi\in \mathbb{R}, |\xi|\geq R\}$,
  \item[3.] for every $\phi\in C_b(\mathbb{T}^N\times \mathbb{R})$, the process
\[
(\omega,t)\in\Omega\times[0,T]\mapsto \langle m,\phi\rangle([0,t]):= \int_{\mathbb{T}^N\times [0,t]\times \mathbb{R}}\phi(x,\xi)dm(x,s,\xi)\in\mathbb{R}
\]
is predictable.
\end{description}
\end{dfn}
\begin{remark}\label{r-1}
  For any $\phi\in C_b(\mathbb{T}^N\times \mathbb{R})$ and kinetic measure $m$, define $A_t:=\langle m, \phi\rangle([0,t]),$
   then a.s., $t\mapsto A_t$ is a right continuous function of finite variation. Moreover, the function $A$ has left limits in any $t\in (0,T]$. We write $A_{t^-}=\lim_{s\uparrow t}A_s$ and set $A_{0^-}=0$.
  As a result, $A_{t^-}=\langle m, \phi\rangle([0,t))$, which is c\`{a}gl\`{a}d (left continuous with right limits).

\end{remark}

\begin{dfn}(Kinetic solution)\label{dfn-1}
Let $\eta\in L^{\infty}(\mathbb{T}^N)$. A measurable function $u: \mathbb{T}^N\times [0,T]\times\Omega\rightarrow \mathbb{R}$ is called a kinetic solution to (\ref{P-19}) with initial datum $\eta$, if
\begin{description}
  \item[1.] $(u(t))_{t\in[0,T]}$ is predictable,
  \item[2.] for any $p\geq1$, there exists $C_p\geq0$ such that
\begin{eqnarray}\label{qq-8}
\mathbb{E}\left(\underset{0\leq t\leq T}{{\rm{ess\sup}}}\ \|u(t)\|^p_{L^p(\mathbb{T}^N)}\right)\leq C_p,
\end{eqnarray}
\item[3.] there exists a kinetic measure $m$ such that $f:= I_{u>\xi}$ satisfies: for all $\varphi\in C^1_c(\mathbb{T}^N\times [0,T)\times \mathbb{R})$,
\begin{eqnarray}\notag
&&\int^T_0\langle f(t), \partial_t \varphi(t)\rangle dt+\langle f_0, \varphi(0)\rangle +\int^T_0\langle f(t), a(\xi)\cdot \nabla \varphi (t)\rangle dt\\ \notag
&=& -\sum_{k\geq 1}\int^T_0\int_{\mathbb{T}^N} g_k(x, u(t,x))\varphi (x,t,u(x,t))dxd\beta_k(t) \\ \label{P-21}
&& -\frac{1}{2}\sum_{k\geq 1}\int^T_0\int_{\mathbb{T}^N}\partial_{\xi}\varphi (x,t,u(x,t))G^2(x,u(t,x))dxdt+ m(\partial_{\xi} \varphi), \ a.s. ,
\end{eqnarray}
where $f_0=I_{\eta>\xi}$, $u(t)=u(\cdot,t,\cdot)$ and $G^2=\sum^{\infty}_{k=1}|g_k|^2$.
\end{description}
\end{dfn}
Let $u$ be a kinetic solution to (\ref{P-19}) and $f=I_{u>\xi}$. We use $\bar{f}:=1-f$ to denote its conjugate function. Define $\Lambda_f:=f-I_{0>\xi}$, which can be viewed as a correction to $f$. Note that $\Lambda_f$ is integrable on $\mathbb{T}^N\times [0,T]\times\Omega$ if $u$ is. In addition,
it is shown in \cite{D-V-1} that almost surely, the function $f=I_{u>\xi}$ admits left and right weak limits at any point $t\in[0,T]$.
\begin{prp}(\cite{D-V-1}, Left and right weak limits)\label{prp-3} Let $f=I_{u>\xi}$ satisfy (\ref{P-21}) with initial value $f_0=I_{\eta>\xi}$. Then $f$ admits, almost surely, left and right limits respectively at every point $t\in [0,T]$. More precisely, for any  $t\in [0,T]$, there exist kinetic functions $f^{t\pm}$ on $\Omega\times \mathbb{T}^N\times \mathbb{R}$ such that $\mathbb{P}-$a.s.
\begin{eqnarray*}
\langle f(t-r),\varphi\rangle\rightarrow \langle f^{t-},\varphi\rangle, \quad
\langle f(t+r),\varphi\rangle\rightarrow \langle f^{t+},\varphi\rangle
\end{eqnarray*}
as $r\rightarrow 0$ for all $\varphi\in C^1_c(\mathbb{T}^N\times \mathbb{R})$. Moreover, almost surely,
\[
\langle f^{t+}-f^{t-}, \varphi\rangle=-\int_{\mathbb{T}^N\times[0,T]\times \mathbb{R}}\partial_{\xi}\varphi(x,\xi)I_{\{t\}}(s)dm(x,s,\xi).
\]
In particular, almost surely, the set of $t\in [0,T]$ fulfilling $f^{t+}\neq f^{t-}$ is countable.
\end{prp}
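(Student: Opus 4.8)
The plan is to test the kinetic equation (\ref{P-21}) against product functions $\varphi(x,t,\xi)=\psi(x,\xi)\alpha(t)$, to deduce that for each fixed $\psi$ the scalar path $t\mapsto\langle f(t),\psi\rangle$ has a c\`{a}dl\`{a}g modification whose jumps are read off from the kinetic measure $m$, and then to make this uniform in $\psi$ by means of the elementary bound $0\le f\le1$.

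\smallskip
\noindent\textit{Step 1: a single test function.} Fix $\psi\in C^1_c(\mathbb{T}^N\times\mathbb{R})$ and put $\varphi(x,t,\xi)=\psi(x,\xi)\alpha(t)$ with $\alpha\in C^1_c([0,T))$ in (\ref{P-21}). As $\alpha$ is arbitrary, the resulting identity is precisely the assertion that, almost surely, $t\mapsto\langle f(t),\psi\rangle$ coincides for a.e.\ $t$ with
\[
F_\psi(t):=\langle f_0,\psi\rangle+\int_0^t\langle f(s),a\cdot\na\psi\rangle\,ds+\half\sum_{k\ge1}\int_0^t\!\!\int_{\mathbb{T}^N}\pa_\xi\psi(x,u)\,G^2(x,u)\,dx\,ds+\zeta_\psi(t)-\nu_\psi([0,t]),
\]
where $\zeta_\psi(t):=\sum_{k\ge1}\int_0^t\!\int_{\mathbb{T}^N}g_k(x,u(s,x))\psi(x,u(s,x))\,dx\,d\beta_k(s)$ and, for a Borel set $B\subset[0,T]$, $\nu_\psi(B):=\int_{\mathbb{T}^N\times B\times\mathbb{R}}\pa_\xi\psi(x,\xi)\,dm(x,s,\xi)$. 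The first three summands of $F_\psi$ are continuous in $t$: the compact $\xi$-support of $\psi$ together with (\ref{e-5}) makes $\zeta_\psi$ a continuous $L^2$-martingale whose bracket is controlled through $\sum_k|C^0_k|^2$ and (\ref{qq-8}), and the $G^2$-term is Lipschitz in $t$ for the same reason; on the other hand $t\mapsto\nu_\psi([0,t])$ is c\`{a}dl\`{a}g of finite variation by Remark \ref{r-1}. Hence $F_\psi$ is c\`{a}dl\`{a}g; we identify $\langle f(\cdot),\psi\rangle$ with it, and its jump at any $t$ equals
\[
\langle f(t),\psi\rangle-\langle f(t^-),\psi\rangle=-\big(\nu_\psi([0,t])-\nu_\psi([0,t))\big)=-\int_{\mathbb{T}^N\times[0,T]\times\mathbb{R}}\pa_\xi\psi(x,\xi)\,I_{\{t\}}(s)\,dm(x,s,\xi).
\]

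\smallskip
\noindent\textit{Step 2: uniformity in $\psi$ and the kinetic-function property.} Choose a countable set $\mathcal{D}\subset C^1_c(\mathbb{T}^N\times\mathbb{R})$ dense in $L^1(\mathbb{T}^N\times\mathbb{R})$. Off a single $\mathbb{P}$-null set, Step 1 holds for all $\psi\in\mathcal{D}$ simultaneously. Since $0\le f(t)\le1$ pointwise, $|\langle f(t),\psi\rangle-\langle f(t),\psi'\rangle|\le\|\psi-\psi'\|_{L^1}$ uniformly in $t$, so for an arbitrary $\psi\in C^1_c$ the path $\langle f(\cdot),\psi\rangle$ is a uniform limit of c\`{a}dl\`{a}g paths, hence c\`{a}dl\`{a}g; for every $t\in[0,T]$ set $\langle f^{t-},\psi\rangle:=\lim_{r\downarrow0}\langle f(t-r),\psi\rangle$ and $\langle f^{t+},\psi\rangle:=\lim_{r\downarrow0}\langle f(t+r),\psi\rangle$, which are linear in $\psi$ with $|\langle f^{t\pm},\psi\rangle|\le\|\psi\|_{L^1}$, so that $f^{t\pm}\in L^\infty(\mathbb{T}^N\times\mathbb{R})$. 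Passing to the limit along a.e.\ times in $0\le f(s)\le1$ and in the monotonicity of $\xi\mapsto f(s,x,\xi)$ gives the corresponding properties of $f^{t\pm}$, and since (\ref{qq-8}) bounds $\|u(s)\|_{L^1(\mathbb{T}^N)}$ uniformly for a.e.\ $s$ the family $\{-\pa_\xi f(s)\}$ is tight in $\xi$, so no mass escapes to infinity in the limit and $f^{t\pm}$ are genuine kinetic functions. The stated jump relation is the displayed jump of $F_\psi$ extended from $\mathcal{D}$ to all $\varphi\in C^1_c(\mathbb{T}^N\times\mathbb{R})$ by the $L^1$-density bound.

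\smallskip
\noindent\textit{Step 3 and the main difficulty.} Almost surely $m$ is a finite measure on $\mathbb{T}^N\times[0,T]\times\mathbb{R}$, so its projection onto $[0,T]$ is a finite measure and has at most countably many atoms; for any $t$ that is not such an atom one has $m(\mathbb{T}^N\times\{t\}\times\mathbb{R})=0$, hence $\langle f^{t+}-f^{t-},\varphi\rangle=0$ for every $\varphi$ by Step 1, i.e.\ $f^{t+}=f^{t-}$, which proves the countability assertion. The delicate point is Step 1, namely converting the purely distributional identity (\ref{P-21}) into a genuinely pathwise c\`{a}dl\`{a}g representative carrying the precise jump structure: this requires controlling the convergence of the infinite series defining the martingale part $\zeta_\psi$ and carefully isolating the finite-variation contribution of $m$ from the continuous ones. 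The tightness argument in Step 2, needed only to certify that the weak limits are bona fide kinetic functions, is the secondary technicality.
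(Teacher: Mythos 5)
Your proof is correct and follows essentially the same route as the original argument of Debussche and Vovelle \cite{D-V-1}, which the present paper cites for this proposition without reproducing a proof: test (\ref{P-21}) against tensor products $\psi(x,\xi)\alpha(t)$ to exhibit a c\`{a}dl\`{a}g representative of $t\mapsto\langle f(t),\psi\rangle$ whose only discontinuous part is $-\nu_\psi([0,\cdot])$, propagate to all test functions via the uniform bound $0\le f\le1$ together with a countable $L^1$-dense family, and deduce the countability of the jump set from the almost-sure finiteness of the kinetic measure $m$. The small imprecisions (the phrase ``first three summands'' ends up also justifying continuity of $\zeta_\psi$, which is the fourth summand, and (\ref{qq-8}) controls all $L^p$ norms, not merely $L^1$) are inessential.
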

For the above function $f=I_{u>\xi}$, define $f^{\pm}$ by $f^{\pm}(t)=f^{t \pm}$, $t\in [0,T]$. Since we are dealing with the filtration associated to Brownian motion, both $f^{+}$ and $f^{-}$ are  clearly predictable as well. Also $f=f^+=f^-$ almost everywhere in time and we can take any of them in an integral with respect to the Lebesgue measure or in a stochastic integral. However, if the integral is with respect to a measure, typically a kinetic measure in this article, the integral is not well-defined for $f$ and may differ if one chooses $f^+ $ or $f^-$.

At the end of this part, we mention that with the aid of Proposition \ref{prp-3}, the following result was verified by \cite{D-V-1}.
 \begin{lemma}\label{lem-1}
 The weak form (\ref{P-21}) satisfied by $f=I_{u>\xi}$ can be strengthened to be weak only with $x$ and $\xi$. Concretely, for all $t\in [0,T)$ and $\varphi\in C^1_c(\mathbb{T}^N\times \mathbb{R})$,
 \begin{eqnarray}\notag
\langle f^+(t),\varphi\rangle&=&\langle f_{0}, \varphi\rangle+\int^t_0\langle f(s), a(\xi)\cdot \nabla \varphi\rangle ds\\
\notag
&&+\sum_{k\geq 1}\int^t_0\int_{\mathbb{T}^N}\int_{\mathbb{R}}g_k(x,\xi)\varphi(x,\xi)d\nu_{x,s}(\xi)dxd\beta_k(s)\\
\label{qq-17}
&& +\frac{1}{2}\int^t_0\int_{\mathbb{T}^N}\int_{\mathbb{R}}\partial_{\xi}\varphi(x,\xi)G^2(x,\xi)d\nu_{x,s}(\xi)dxds- \langle m,\partial_{\xi} \varphi\rangle([0,t]), \quad a.s.,
\end{eqnarray}
with $\nu_{x,s}=-\partial_{\xi}f=\delta_{u(x,s)=\xi}$ and we set $f^+(T)=f(T)$.
 \end{lemma}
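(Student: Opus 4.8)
The plan is to derive (\ref{qq-17}) from the space--time weak formulation (\ref{P-21}) by inserting there test functions of product form and then letting the temporal factor collapse onto an indicator. Fix $t\in[0,T)$ and $\phi\in C^1_c(\mathbb{T}^N\times\mathbb{R})$, whose $\xi$--support we place inside some interval $[-M,M]$. For $0<\epsilon<T-t$, take $\psi_\epsilon\in C^1([0,T])$ with $\psi_\epsilon\equiv1$ on $[0,t]$, $\psi_\epsilon\equiv0$ on $[t+\epsilon,T]$, and $\psi_\epsilon(s)=\rho\big(\epsilon^{-1}(s-t)\big)$ on $[t,t+\epsilon]$, where $\rho\in C^1([0,1])$ is a fixed monotone profile with $\rho(0)=1$, $\rho(1)=0$, $\rho'(0)=\rho'(1)=0$. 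Then $\varphi(x,s,\xi):=\phi(x,\xi)\psi_\epsilon(s)$ belongs to $C^1_c(\mathbb{T}^N\times[0,T)\times\mathbb{R})$, hence is admissible in (\ref{P-21}); moreover $\varphi(0)=\phi$, $\partial_s\varphi=\phi\,\psi_\epsilon'$, $\nabla\varphi=\psi_\epsilon\,\nabla\phi$, $\partial_\xi\varphi=\psi_\epsilon\,\partial_\xi\phi$.

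I would then pass to the limit $\epsilon\downarrow0$ term by term in (\ref{P-21}). In the first term the substitution $s=t+\epsilon\tau$ gives $\int_0^T\psi_\epsilon'(s)\langle f(s),\phi\rangle\,ds=\int_0^1\rho'(\tau)\langle f(t+\epsilon\tau),\phi\rangle\,d\tau$, which by the right weak limit of Proposition~\ref{prp-3} and bounded convergence tends to $\langle f^{t+},\phi\rangle\int_0^1\rho'(\tau)\,d\tau=-\langle f^{t+},\phi\rangle$; this is the only place where the one--sided limits of $f$ enter. The term $\langle f_0,\varphi(0)\rangle$ equals $\langle f_0,\phi\rangle$, independently of $\epsilon$. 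Since $\phi$ is fixed, the maps $s\mapsto\langle f(s),a(\xi)\cdot\nabla\phi\rangle$ and $s\mapsto\int_{\mathbb{T}^N}\partial_\xi\phi(x,u(s,x))G^2(x,u(s,x))\,dx$ are bounded uniformly in $(s,\omega)$ (using $|a(\xi)|\le\mathcal{N}(q_0)(1+|\xi|^{q_0})$ and $G^2(x,\xi)\le D_0(1+|\xi|^2)$ for $|\xi|\le M$), so by dominated convergence the corresponding terms converge to the same integrals over $[0,t]$, and rewriting $\int_{\mathbb{T}^N}(\cdot)(x,u(s,x))\,dx=\int_{\mathbb{T}^N}\int_{\mathbb{R}}(\cdot)(x,\xi)\,d\nu_{x,s}(\xi)\,dx$ with $\nu_{x,s}=\delta_{u(x,s)=\xi}$ puts them in the form of (\ref{qq-17}). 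For the measure term, $m(\partial_\xi\varphi)=\int\psi_\epsilon(s)\,\partial_\xi\phi(x,\xi)\,dm\to\langle m,\partial_\xi\phi\rangle([0,t])$ by dominated convergence with respect to the a.s.\ finite measure $m$, as $\psi_\epsilon\to\mathbf{1}_{[0,t]}$ pointwise and $0\le\psi_\epsilon\le1$. Finally, setting $h_k(s):=\int_{\mathbb{T}^N}g_k(x,u(s,x))\phi(x,u(s,x))\,dx$, the bound $|g_k(x,\xi)|\le C^0_k(1+M)$ for $|\xi|\le M$ together with $\sum_k|C^0_k|^2\le D_0/2$ gives $\sum_k h_k(s)^2\lesssim\|\phi\|_\infty^2(1+M)^2$ uniformly in $(s,\omega)$, so by It\^o's isometry $\mathbb{E}\big|\sum_k\int_0^T(\psi_\epsilon(s)-\mathbf{1}_{[0,t]}(s))h_k(s)\,d\beta_k(s)\big|^2=\int_0^T(\psi_\epsilon-\mathbf{1}_{[0,t]})^2\sum_k\mathbb{E}\,h_k^2\,ds\lesssim\epsilon\to0$; thus the stochastic term converges in $L^2(\Omega)$ to $\sum_k\int_0^t h_k(s)\,d\beta_k(s)$. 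Collecting the limits and rearranging produces exactly (\ref{qq-17}) for this fixed $t$ and $\phi$, almost surely.

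It remains to make the exceptional null set independent of $t$ and $\phi$. I would fix a countable $\mathcal{D}\subset C^1_c(\mathbb{T}^N\times\mathbb{R})$ such that every $\phi\in C^1_c(\mathbb{T}^N\times\mathbb{R})$ is the limit of a sequence in $\mathcal{D}$ with uniformly bounded supports and with $\phi,\nabla\phi,\partial_\xi\phi$ converging uniformly, together with a countable dense set $S\subset[0,T)$; intersecting the full--probability events above over $(t,\phi)\in S\times\mathcal{D}$ yields one full--probability event on which (\ref{qq-17}) holds for all $t\in S$ and all $\phi\in\mathcal{D}$. For fixed $\phi\in\mathcal{D}$, both sides of (\ref{qq-17}) are right--continuous in $t$ --- on the right, the stochastic integral and the two Lebesgue integrals are a.s.\ continuous and $t\mapsto\langle m,\partial_\xi\phi\rangle([0,t])$ is c\`adl\`ag by Remark~\ref{r-1}; on the left, $t\mapsto\langle f^{t+},\phi\rangle$ is right--continuous because it is the right limit of $t\mapsto\langle f(t),\phi\rangle$ (Proposition~\ref{prp-3}) --- so the identity propagates from $S$ to all of $[0,T)$, and then to all $\phi\in C^1_c(\mathbb{T}^N\times\mathbb{R})$ by approximation, passing to the limit in each term via the uniform bounds above (in $L^2(\Omega)$, hence along a subsequence a.s., for the stochastic term); the value $t=T$ is covered by the convention $f^+(T)=f(T)$.

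The main obstacle is the passage to the limit in the martingale term: one has to guarantee that $\{\psi_\epsilon(\cdot)(h_k(\cdot))_{k\geq1}\}_\epsilon$ stays uniformly bounded in $L^2(\Omega\times[0,T];\ell^2)$, which rests on the linear growth (\ref{e-5}) of the $g_k$ together with the compactness of the $\xi$--support of $\phi$ (if one does not want to use that support, the moment bound (\ref{qq-8}) serves the same purpose). A secondary but essential bookkeeping point is that taking $\psi_\epsilon\equiv1$ on the \emph{closed} interval $[0,t]$ is precisely what makes the right weak limit $f^{t+}$ (rather than $f^{t-}$) appear on the left of (\ref{qq-17}) and the closed interval $[0,t]$ appear in the measure term on the right, consistently with the jump relation $\langle f^{t+}-f^{t-},\varphi\rangle=-\int\partial_\xi\varphi\,\mathbf{1}_{\{t\}}\,dm$ of Proposition~\ref{prp-3}. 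Everything else reduces to routine dominated convergence once the one--sided weak limits of $f$ are granted.
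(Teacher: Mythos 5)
The paper itself gives no proof of Lemma~\ref{lem-1}: it is quoted verbatim from Debussche and Vovelle \cite{D-V-1}, so there is no internal argument to compare against. Your derivation reproduces the standard argument behind that citation: test (\ref{P-21}) with the product $\phi(x,\xi)\psi_\epsilon(s)$, where $\psi_\epsilon$ is a $C^1$ cutoff equal to $1$ on the closed interval $[0,t]$ and supported in $[0,t+\epsilon]\subset[0,T)$, and let $\epsilon\downarrow0$. The $\partial_t$-term produces $-\langle f^{t+},\phi\rangle$ from the right weak limit of Proposition~\ref{prp-3} together with dominated convergence and $\int_0^1\rho'=-1$; the flux, It\^o-correction and kinetic-measure terms converge by dominated convergence (the measure $m$ being a.s.\ finite and $\psi_\epsilon\to\mathbf{1}_{[0,t]}$ pointwise with $0\le\psi_\epsilon\le1$); and the martingale term converges in $L^2(\Omega)$ by It\^o's isometry, using the bound $\sum_k h_k(s)^2\lesssim(1+M)^2\|\phi\|_\infty^2$ that follows from the compact $\xi$-support of $\phi$ and (\ref{e-5}). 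Rearranging yields exactly (\ref{qq-17}) for each fixed $(t,\phi)$, which is what the lemma asserts. Your remark that the choice $\psi_\epsilon\equiv1$ on the \emph{closed} interval is what forces the $+$-limit of $f$ and the closed interval $[0,t]$ in the $m$-term is the right bookkeeping, and is consistent with the jump formula of Proposition~\ref{prp-3}.

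One minor overreach in your final paragraph: when you pass from $\phi\in\mathcal{D}$ to general $\phi\in C^1_c(\mathbb{T}^N\times\mathbb{R})$, the $L^2$-convergence of the stochastic integral gives a.s.\ convergence only along a $\phi$-dependent subsequence, so that step does not by itself produce a single null set valid for all $\phi$ simultaneously; to get a $\phi$-uniform exceptional set one would need an additional device (e.g.\ a Kolmogorov-type continuity criterion in the $\phi$-variable, using linearity and the bound $\mathbb{E}\sup_t|M_\phi(t)|^2\lesssim\|\phi\|_\infty^2$). This does not affect the lemma as phrased, where the a.s.\ qualifier sits inside the quantifier over $(t,\varphi)$, but you should be aware that the approximation argument as written slightly overstates what it delivers.
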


\begin{remark}  By making modifications, we have for all $t\in (0,T]$ and $\varphi\in C^1_c(\mathbb{T}^N\times \mathbb{R})$,
   \begin{eqnarray}\notag
\langle f^-(t),\varphi\rangle&=&\langle f_{0}, \varphi\rangle+\int^t_0\langle f(s), a(\xi)\cdot \nabla \varphi\rangle ds\\
\notag
&&+\sum_{k\geq 1}\int^t_0\int_{\mathbb{T}^N}\int_{\mathbb{R}}g_k(x,\xi)\varphi(x,\xi)d\nu_{x,s}(\xi)dxd\beta_k(s)\\
\label{e-80}
&& +\frac{1}{2}\int^t_0\int_{\mathbb{T}^N}\int_{\mathbb{R}}\partial_{\xi}\varphi(x,\xi)G^2(x,\xi)d\nu_{x,s}(\xi)dxds- \langle m,\partial_{\xi} \varphi\rangle([0,t)), \quad a.s.,
\end{eqnarray}
and we set $ f^-(0)=f_0$.
\end{remark}

The following result was shown by Theorem 24 in \cite{D-V-1}.
\begin{thm}\label{thm-4}
(\cite{D-V-1}, Existence and Uniqueness) Let $\eta\in L^{\infty}(\mathbb{T}^N)$. Assume Hypothesis H holds, then there is a unique kinetic solution $u$ to equation (\ref{P-19}) with initial datum $\eta$.
\end{thm}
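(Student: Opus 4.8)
The plan is to follow the kinetic approach of \cite{D-V-1}, which splits the statement into two parts: an $L^1$-type uniqueness (indeed a comparison) principle obtained from a stochastic version of Kru\v{z}kov's doubling of variables, and an existence proof by vanishing viscosity in which the doubling estimate is reused to reduce the limiting Young-measure-valued object to a genuine kinetic solution.

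For uniqueness, I would take two kinetic solutions $u_1,u_2$ with kinetic measures $m_1,m_2$, set $f_i=I_{u_i>\xi}$ and $\bar{f}_i=1-f_i$, and use Lemma~\ref{lem-1} to write the equation for $\langle f_1^+(t),\varphi\rangle$ tested against $\varphi(x,\xi)$ and the one for $\langle \bar{f}_2^+(t),\psi\rangle$ tested against $\psi(y,\zeta)$. Because we work on a Brownian filtration these are It\^o processes in $t$, so It\^o's formula applies to $\langle f_1^+(t),\varphi\rangle\langle \bar{f}_2^+(t),\psi\rangle$. Specializing to $\varphi(x,\xi)\psi(y,\zeta)=\rho_\delta(x-y)\rho_\varepsilon(\xi-\zeta)$ for standard mollifiers, taking expectations (which kills the two martingale terms) and letting $\delta,\varepsilon\to 0$, I expect to reach
\[
\mathbb{E}\int_{\mathbb{T}^N}\!\int_{\mathbb{R}} (f_1^+\bar{f}_2^+)(t)\,d\xi\,dx
\le \mathbb{E}\int_{\mathbb{T}^N}\!\int_{\mathbb{R}} f_{1,0}\,\bar{f}_{2,0}\,d\xi\,dx + R_{\delta,\varepsilon},
\]
where the flux terms cancel after integration by parts (their integrability coming from (\ref{qeq-22}) and the moment bound (\ref{qq-8})), the two kinetic-measure terms carry a favourable sign and are discarded, and the It\^o second-order corrections combine with the martingale cross-variation into a remainder bounded, via (\ref{equ-29}), by $R_{\delta,\varepsilon}\to 0$. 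Since $f_{1,0}=f_{2,0}=I_{\eta>\xi}$ the right-hand side vanishes; symmetrizing and using the identity $\int_{\mathbb{R}}(f_1\bar{f}_2+\bar{f}_1 f_2)\,d\xi=|u_1-u_2|$ then gives $\mathbb{E}\|u_1(t)-u_2(t)\|_{L^1(\mathbb{T}^N)}=0$ for every $t$, whence uniqueness.

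For existence, I would regularize (\ref{P-19-1}) by the viscous problem $du^\varepsilon+\mathrm{div}(A(u^\varepsilon))\,dt=\varepsilon\Delta u^\varepsilon\,dt+\sum_{k\geq 1}g_k(x,u^\varepsilon)\,d\beta_k(t)$ on $\mathbb{T}^N$ with datum $\eta$, which for fixed $\varepsilon>0$ has a unique solution by standard parabolic SPDE theory. It\^o's formula applied to $\int_{\mathbb{T}^N}\theta(u^\varepsilon)\,dx$ for convex $\theta$ of polynomial growth, together with Hypothesis~H, yields bounds of the type (\ref{qq-8}) uniform in $\varepsilon$ as well as $\varepsilon\,\mathbb{E}\int_0^T\|\nabla u^\varepsilon\|^2_{L^2}\,dt\le C$. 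The function $f^\varepsilon=I_{u^\varepsilon>\xi}$ then solves the kinetic equation with kinetic measure $m^\varepsilon=\varepsilon|\nabla u^\varepsilon|^2\delta_{u^\varepsilon=\xi}$, of uniformly bounded total mass, so along a subsequence $f^\varepsilon$ and $m^\varepsilon$ converge weak-$\ast$; since only weak convergence of $f^\varepsilon$ is available its limit is represented through a Young measure $\nu_{x,t}$, and $(\nu,m)$ satisfies a generalized version of (\ref{P-21}). Finally, rerunning the doubling computation above with this generalized solution against itself and using that the initial Young measure is the Dirac mass $\delta_\eta$ forces $\nu_{x,t}=\delta_{u(x,t)}$ for a.e.\ $(x,t)$, so $u$ is a kinetic solution with datum $\eta$.

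I expect the main obstacle to be the stochastic doubling estimate: It\^o's formula must be applied to a product of two merely distributional solutions of SPDEs driven by the \emph{same} cylindrical noise, and one must check that the two It\^o corrections $\frac{1}{2}G^2\partial_\xi(\cdot)$ and $\frac{1}{2}G^2\partial_\zeta(\cdot)$ are exactly compensated by the martingale cross-variation, up to a term controlled by the joint $(x,u)$-Lipschitz bound (\ref{equ-29}) — this is what renders the noise harmless. Tracking the several error terms produced by the two independent regularizations (in the space variable and in the velocity variable), the order in which $\delta$ and $\varepsilon$ are sent to zero, and the left/right weak-limit subtleties of Proposition~\ref{prp-3} (in particular whether $f^+$ or $f^-$ is paired with the kinetic measure) is where the real work lies; the remaining steps are careful but essentially routine adaptations of the deterministic Kru\v{z}kov theory.
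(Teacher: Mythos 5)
The paper does not supply its own proof of this theorem; it is quoted verbatim from Theorem~24 of Debussche--Vovelle \cite{D-V-1}. Your outline faithfully reconstructs that reference's argument (stochastic Kru\v{z}kov doubling of variables for $L^1$-contraction and hence uniqueness, vanishing viscosity together with Young-measure compactness for existence), and the specific doubling mechanism you sketch --- It\^{o}'s formula applied to $\langle f_1^{+},\varphi\rangle\langle\bar{f}_2^{+},\psi\rangle$, discarding the nonnegative kinetic-measure contributions, and cancelling the It\^{o} second-order corrections against the martingale cross-variation via the joint Lipschitz bound (\ref{equ-29}) --- is precisely the computation the present paper reproduces and adapts in Proposition~\ref{prp-1} to compare $u^{\varepsilon}$ with $v^{\varepsilon}$.
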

Moreover, by Corollary 16 in \cite{D-V-1}, it follows that
\begin{cor}\label{cor-1}
(Continuity in time). Let $\eta\in L^{\infty}(\mathbb{T}^N)$. Assume Hypothesis H is in force, then for every $p\in [1, +\infty)$, the kinetic solution $u$ to (\ref{P-19}) with initial datum $\eta$ has almost sure continuous trajectories in $L^p(\mathbb{T}^N)$.
\end{cor}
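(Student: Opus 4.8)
This is Corollary~16 of \cite{D-V-1}, and I would argue as follows. Since $\mathbb{T}^N$ has unit measure, one has $\|v\|_{L^r(\mathbb{T}^N)}\le\|v\|_{L^q(\mathbb{T}^N)}$ whenever $1\le r\le q$, so it is enough to establish continuity in $L^1(\mathbb{T}^N)$ and then bootstrap using the a priori control (\ref{qq-8}). Indeed, suppose $u$ admits a modification whose trajectories lie $\mathbb{P}$-a.s.\ in $C([0,T];L^1(\mathbb{T}^N))$; fix $p\in(1,\infty)$ and choose $q>p$. By (\ref{qq-8}) together with Fatou's lemma --- applied along a sequence $t_n\to t$ with $u(t_n)\to u(t)$ in $L^1(\mathbb{T}^N)$ and $\|u(t_n)\|_{L^q}$ bounded --- we get $\sup_{t\in[0,T]}\|u(t)\|_{L^q}<\infty$ $\mathbb{P}$-a.s. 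The interpolation inequality $\|v\|_{L^p}\le\|v\|_{L^1}^{\theta}\|v\|_{L^q}^{1-\theta}$ with the appropriate $\theta\in(0,1)$, applied to $v=u(t)-u(s)$, then yields $\|u(t)-u(s)\|_{L^p}\lesssim\|u(t)-u(s)\|_{L^1}^{\theta}\to0$ as $s\to t$; this gives continuity in every $L^p(\mathbb{T}^N)$, $p\in[1,\infty)$.

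It remains to produce the modification continuous into $L^1(\mathbb{T}^N)$. The kinetic formulation readily gives weak continuity in time: testing the strengthened identities (\ref{qq-17}) and (\ref{e-80}) against $\varphi(x,\xi)=\psi(x)\theta(\xi)$ with $\psi\in C^\infty(\mathbb{T}^N)$ and $\theta\in C^1_c(\mathbb{R})$, and using $\nu_{x,s}=\delta_{u(x,s)=\xi}$, the process $t\mapsto\langle f^{+}(t),\psi\theta\rangle$ is a constant plus an absolutely continuous drift (bounded via (\ref{qeq-22}), (\ref{equ-28}) and (\ref{qq-8})), plus a continuous martingale, minus the non-decreasing term $\langle m,\partial_\xi(\psi\theta)\rangle([0,t])$; hence it is c\`{a}dl\`{a}g and, by Proposition~\ref{prp-3}, coincides with $t\mapsto\langle f^{-}(t),\psi\theta\rangle$ off an at most countable set. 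To upgrade this to strong $L^1$-continuity I would exploit that the unique kinetic solution of Theorem~\ref{thm-4} is obtained as a vanishing-viscosity limit: the viscous approximations $u_\varepsilon$ satisfy a temporal $L^1(\mathbb{T}^N)$-modulus estimate uniform in $\varepsilon$ --- the stochastic analogue of Kru\v{z}kov's estimate, obtained by comparing time- and space-translates and using (\ref{equ-29}) to absorb the noise contribution --- which passes to the limit, equips $t\mapsto u(t)$ with a genuine modulus of continuity in $L^1(\mathbb{T}^N)$, and in particular forces the left and right weak limits above to agree for every $t$.

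\textbf{Main obstacle.} The crux is exactly this strong $L^1$-in-time continuity. It does not follow from the kinetic formulation alone: weak continuity plus continuity of the $L^1$-norm would not suffice because $L^1(\mathbb{T}^N)$ is not uniformly convex, and for $p>1$ the continuity of the $L^p$-norm would itself require excluding time-atoms of the kinetic measure $m$. One is thus led back to the uniform-in-viscosity temporal estimates for the parabolic approximations underlying Theorem~\ref{thm-4}; by contrast, the interpolation reduction of the first paragraph and the weak continuity of the second are soft.
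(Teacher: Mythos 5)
The paper provides no proof of its own here---Corollary~\ref{cor-1} is simply cited from Corollary~16 of \cite{D-V-1}---so I compare your sketch against the argument in that reference. Your interpolation reduction to $L^1$ (via Fatou together with the a~priori bound (\ref{qq-8})) and your observation of weak-$\ast$ continuity via (\ref{qq-17}) and (\ref{e-80}) are both sound and soft, as you say. The gap is precisely in the step you flag as the crux: you propose re-deriving strong $L^1$-in-time continuity from a vanishing-viscosity temporal modulus estimate. That is not the route in \cite{D-V-1}, and it is doubtful the route is even available in this generality, since Kru\v{z}kov-type time-modulus bounds uniform in the viscosity typically need $BV$ or $W^{1,1}$ initial data rather than merely $\eta\in L^\infty(\mathbb{T}^N)$; in any case, you assert this step without carrying it out, so the heart of the proof is missing.

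What Debussche--Vovelle actually do is apply the doubling-variables comparison (their Proposition~13, the analogue of Proposition~\ref{prp-1} in the present paper) to $f^+$ and $f^-$ of the \emph{same} solution. Since the initial kinetic functions coincide, after passing $\gamma,\delta\to0$ the contraction-type estimate yields $\|u^+(t)-u^-(t)\|_{L^1(\mathbb{T}^N)}=0$ for every $t$, a.s.; consequently $f^+=f^-$ is simultaneously c\`{a}dl\`{a}g and c\`{a}gl\`{a}d in the weak-$\ast$ topology (Proposition~\ref{prp-3}), hence weak-$\ast$ continuous. The decisive point your sketch overlooks---which also dissolves the ``uniform convexity'' worry you raise---is that the weak limits are themselves kinetic functions, i.e.\ indicators $I_{u(t)>\xi}$. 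For an indicator $g=I_{v>\xi}$ one has $g\bar{g}\equiv 0$, and if $f_n=I_{u_n>\xi}\rightharpoonup g$ weak-$\ast$ with $L^p$-tight Young measures, then $\|u_n-v\|_{L^1}=\int (f_n\bar{g}+\bar{f}_n g)\,d\xi\,dx \to 2\int g\bar{g}\,d\xi\,dx=0$. Strong $L^1$-continuity thus follows with no modulus estimate and no appeal to the viscous approximation; your interpolation step then lifts it to every $L^p$, $p\in[1,\infty)$. In short: the correct mechanism is already inside the paper (Proposition~\ref{prp-1} applied to a solution against itself), not outside it in the parabolic regularization.
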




\section{Small time asymptotics and statement of our main result}
In the rest part, we take $T=1$. Let $0<\varepsilon\leq 1$, by the scaling property of the Brownian motion, it is readily to deduce that $u(\varepsilon t)$ coincides in law with the solution of the following equation:
\begin{eqnarray}\label{eqq-5}
u^{\varepsilon}_{\eta}(t,x)+\varepsilon\int^t_0 div (A(u^{\varepsilon}_{\eta}(s)))ds=\eta(x)
+\sqrt{\varepsilon} \int^t_0 \sum_{k\geq 1}g_k(x,u^{\varepsilon}_{\eta}(s,x)) d\beta_k(s).
\end{eqnarray}
 By Theorem \ref{thm-4}, there is a unique kinetic solution $u^{\varepsilon}_{\eta}$. Applying Sections 6 and 7 in \cite{DHV} with $A=0$, we obtain for any $p\geq 1$,
\begin{eqnarray}\label{qq-27}
\sup_{0<\varepsilon\leq 1}\mathbb{E} \underset{0\leq s\leq 1}{{\rm{ess\sup}}}\ \|u^{\varepsilon}_{\eta}(s)\|^p_{L^p(\mathbb{T}^N)}<\infty.
\end{eqnarray}
By Lemma \ref{lem-1}, there exists a kinetic measure $m^{\varepsilon}_1$ such that  $f_1(x,t,\xi):=I_{u^{\varepsilon}_{\eta}(x,t)>\xi}$ satisfies that
 for all $t\in [0,1)$ and $\varphi\in C^1_c(\mathbb{T}^N\times \mathbb{R})$,
\begin{eqnarray}\notag
\langle f^+_1(t),\varphi\rangle&=&\langle f_{1,0}, \varphi\rangle+\varepsilon\int^t_0\langle f_1(s), a(\xi)\cdot \nabla \varphi\rangle ds\\
\notag
&&+\sqrt{\varepsilon}\sum_{k\geq 1}\int^t_0\int_{\mathbb{T}^N}\int_{\mathbb{R}}g_k(x,\xi)\varphi(x,\xi)d\nu^{1,\varepsilon}_{x,s}(\xi)dxd\beta_k(s)\\
\label{eq-11}
&& +\frac{\varepsilon}{2}\int^t_0\int_{\mathbb{T}^N}\int_{\mathbb{R}}\partial_{\xi}\varphi(x,\xi)G^2(x,\xi)d\nu^{1,\varepsilon}_{x,s}(\xi)dxds- \langle m^{\varepsilon}_1,\partial_{\xi} \varphi\rangle([0,t]), \quad a.s.,
\end{eqnarray}
where $\nu^{1,\varepsilon}_{x,s}(\xi)=-\partial_{\xi}f_1(x,s,\xi)=\delta_{u^{\varepsilon}_{\eta}(x,s)=\xi}$ and we set $f^+_1(1)=f_1(1)$.

For $h\in L^2([0,1];U)$ with the form $h(t)=\sum_{k\geq 1}h_k(t)e_k$, consider the following deterministic equation:
\begin{eqnarray*}
\left\{
  \begin{array}{ll}
    du^h(t,x)=\sum_{k\geq 1}g_k(x,u^h(t,x))h_k(t)dt, &  \\
    u^h(0)=\eta. &
  \end{array}
\right.
\end{eqnarray*}
{\color{rr}Applying Theorem 5.1 and Theorem 5.3 in \cite{DWZZ} with $A=0$, there exists a unique kinetic solution $u^h_{\eta}$ in the space $L^1([0,1];L^1(\mathbb{T}^N))$.}
Define
\[
R(h)=\frac{1}{2}\sum_{k\geq 1}\int^1_0|h_k(t)|^2dt.
\]
For $\varrho\in L^1([0,1];L^1(\mathbb{T}^N))$, define
\[
\mathcal{L}_{\varrho}=\Big\{h\in L^2([0,1];U): \varrho(\cdot)=u^h_{\eta}(\cdot)\Big\}
\]
Set
\begin{eqnarray}\label{eqq-6}
I(\varrho)=\left\{
             \begin{array}{ll}
              \inf_{h\in \mathcal{L}_{\varrho}}R(h) , & {\rm{if}}\  \mathcal{L}_{\varrho}\neq \emptyset \\
              +\infty , &  {\rm{if}}\  \mathcal{L}_{\varrho}= \emptyset
             \end{array}
           \right.
\end{eqnarray}

{\color{rr}For any initial value $\eta\in L^{\infty}(\mathbb{T}^N)$, let $u^{\varepsilon}_{\eta}$ be the kinetic solution of  (\ref{eqq-5}). Denote by $\mu^{\varepsilon}_{\eta}$ the law of $u^{\varepsilon}_{\eta}$ on the space $L^1([0,1];L^1(\mathbb{T}^N))$.} The main result of this article reads as follows.
\begin{thm}\label{thm-1}
 Let the initial value $\eta\in L^{\infty}(\mathbb{T}^N)$. Assume Hypotheses H is in force, then $\{\mu^{\varepsilon}_{\eta},\varepsilon>0\}$ satisfies a large deviation principle with the rate function $I(\cdot)$ defined by (\ref{eqq-6}), that is,
\begin{description}
  \item[(i)] For any closed subset $F\subset L^1([0,1];L^1(\mathbb{T}^N)) $,
  \[
  \limsup_{\varepsilon\rightarrow 0}\varepsilon \log \mu^{\varepsilon}_{\eta}(F)\leq -\inf_{\varrho\in F}I(\varrho).
  \]
  \item[(ii)] For any open subset $G\subset L^1([0,1];L^1(\mathbb{T}^N)) $,
  \[
  \liminf_{\varepsilon\rightarrow 0}\varepsilon \log \mu^{\varepsilon}_{\eta}(G)\geq -\inf_{\varrho\in G}I(\varrho).
  \]
\end{description}
\end{thm}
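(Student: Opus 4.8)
## Proof Strategy

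The plan is to establish the large deviation principle via the standard exponential-equivalence approach to small-time LDPs. The key observation is that the small-time LDP for $u^\varepsilon_\eta$ should be governed by the same rate function as the LDP for the ``pure noise'' approximation, in which the flux term is dropped entirely. More precisely, let $v^\varepsilon_\eta$ denote the kinetic solution of the equation obtained from (\ref{eqq-5}) by deleting the $\varepsilon\int_0^t \operatorname{div}(A(\cdot))\,ds$ term, i.e. the SDE $dv^\varepsilon_\eta = \sqrt{\varepsilon}\sum_k g_k(x,v^\varepsilon_\eta)\,d\beta_k$. This is a genuine (infinite system of) stochastic differential equation(s) with Lipschitz coefficients (by (\ref{e-6})), parametrized by $x\in\mathbb{T}^N$, and its small-noise LDP in $L^1([0,1];L^1(\mathbb{T}^N))$ with rate function $I(\cdot)$ from (\ref{eqq-6}) follows from the Freidlin--Wentzell theory / weak convergence approach already developed in \cite{DWZZ} (this is exactly where the skeleton equation $u^h_\eta$ and the functional $R(h)$ come from). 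So the whole problem reduces to proving that $u^\varepsilon_\eta$ and $v^\varepsilon_\eta$ are \emph{exponentially equivalent} in $L^1([0,1];L^1(\mathbb{T}^N))$ at speed $\varepsilon$: for every $\delta>0$,
\begin{eqnarray}\label{eq-expequiv}
\limsup_{\varepsilon\to 0}\varepsilon\log\mathbb{P}\left(\|u^\varepsilon_\eta - v^\varepsilon_\eta\|_{L^1([0,1];L^1(\mathbb{T}^N))}>\delta\right)=-\infty.
\end{eqnarray}
Once (\ref{eq-expequiv}) is in hand, the LDP for $\mu^\varepsilon_\eta$ with rate $I(\cdot)$ follows from the general fact that exponentially equivalent families share the same LDP.

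The core of the paper is therefore the proof of (\ref{eq-expequiv}), and this is where the doubling-of-variables technique enters. I would estimate $\mathbb{E}\|u^\varepsilon_\eta(t)-v^\varepsilon_\eta(t)\|_{L^1(\mathbb{T}^N)}$ — or rather suitable exponential moments of it — by comparing the two kinetic formulations. Writing $f_1 = I_{u^\varepsilon_\eta>\xi}$ and $f_2 = I_{v^\varepsilon_\eta>\xi}$ with their respective kinetic measures $m^\varepsilon_1, m^\varepsilon_2$, one forms the quantity $\int_{\mathbb{T}^N}\int_{\mathbb{R}} f_1(1-f_2) + f_2(1-f_1)\,d\xi\,dx$ (the kinetic expression for the $L^1$ distance) and differentiates it in time using (\ref{eq-11}) for both solutions against a regularized test function (mollified in $x$ and $\xi$ on the diagonal), i.e. the doubled-variable test function $\rho_\theta(x-y)\psi_\delta(\xi-\zeta)$. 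The flux contributions produce terms of size $O(\varepsilon)$ times the polynomial-growth bound from (\ref{qeq-22}), controlled using the uniform moment bound (\ref{qq-27}); the noise terms largely cancel by the Lipschitz structure (\ref{equ-29}), leaving a Gronwall-type inequality; and the kinetic-measure terms have a favorable sign and can be discarded. The upshot is a bound of the form $\mathbb{E}\|u^\varepsilon_\eta - v^\varepsilon_\eta\|_{L^1([0,1];L^1(\mathbb{T}^N))} \lesssim \varepsilon^{\gamma}$ for some $\gamma>0$ after optimizing over the regularization parameters $\theta,\delta$ as functions of $\varepsilon$.

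To upgrade the mean estimate to the exponential statement (\ref{eq-expequiv}) I would track the stochastic integral (martingale) term carefully: after the doubling-variables computation, the difference process satisfies an identity whose martingale part has quadratic variation controlled, via (\ref{equ-28})--(\ref{equ-29}), by $\varepsilon$ times (a polynomial in) the solution norms. One then applies an exponential martingale inequality (Bernstein/Davis--type, or the exponential supermartingale trick $\exp(\lambda M_t - \tfrac{\lambda^2}{2}\langle M\rangle_t)$) together with the uniform exponential integrability of $\sup_t\|u^\varepsilon_\eta(t)\|_{L^p}$ that can be extracted from (\ref{qq-27}) and the structure of the equation, and uses a Chebyshev-exponential bound to conclude. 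This last part — estimating the martingale terms and the ``error terms'' arising from the regularization of the kinetic formulation, and showing their contribution to $\varepsilon\log\mathbb{P}(\cdot)$ vanishes — is, as the authors themselves flag in the introduction, the main obstacle: the kinetic solutions live in a very weak functional setting, so all these estimates must be carried out at the level of the kinetic measures and entropy-defect measures, with the doubling-variables machinery of \cite{D-V-1} substituting for the pathwise $L^1$-contraction that would be available in a more regular setting. The choice of how the mollification scales $(\theta(\varepsilon),\delta(\varepsilon))$ and the exponential-moment exponent $\lambda(\varepsilon)$ degenerate as $\varepsilon\to 0$ has to be balanced so that every error term is $o(1/\varepsilon)$ on the logarithmic scale.
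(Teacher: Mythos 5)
Your proposal follows essentially the same route as the paper: introduce the flux-free auxiliary equation $dv^\varepsilon_\eta=\sqrt{\varepsilon}\sum_k g_k(x,v^\varepsilon_\eta)\,d\beta_k$, invoke the LDP from \cite{DWZZ} (with $A=0$) for its law, reduce the theorem to exponential equivalence of $u^\varepsilon_\eta$ and $v^\varepsilon_\eta$ in $L^1([0,1];L^1(\mathbb{T}^N))$, and then prove that equivalence by Kru\v{z}kov doubling of variables applied to the kinetic formulations, balancing the mollification scales against $\varepsilon$. The martingale control you describe is also what the paper does, though it is implemented via the Barlow--Yor/Davis moment inequality with $p=1/\varepsilon$ plus a Chebyshev bound rather than an exponential supermartingale estimate; these are functionally equivalent here.
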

\begin{proof} Applying Theorem 24 in \cite{D-V-1} with $A=0$, we know that there exists a unique kinetic solution $v^{\varepsilon}_{\eta}$ to the following stochastic equation
\begin{eqnarray}\label{eq-7}
v^{\varepsilon}_{\eta}(t,x)=\eta(x)+\sqrt{\varepsilon}\int^t_0\sum_{k\geq 1}g_k(x, v^{\varepsilon}_{\eta}(s,x))d\beta_k(s).
\end{eqnarray}
Let $\vartheta^{\varepsilon}_{\eta}$ be the law of $v^{\varepsilon}_{\eta}(\cdot)$ on $L^1([0,1];L^1(\mathbb{T}^N))$. According to {\color{rr}Theorem 4.2} in \cite{DWZZ} with $A=0$,
it follows that $\vartheta^{\varepsilon}_{\eta}$ satisfies a large deviation principle with the rate function $I(\cdot)$. Based on Theorem 4.2.13 in \cite{DZ}, it suffices to show that two families of the probability measures $\mu^{\varepsilon}_{\eta}$ and $\vartheta^{\varepsilon}_{\eta}$ are exponentially equivalent, that is, for any $\iota >0$,
\begin{eqnarray}\label{eq-8}
\lim_{\varepsilon\rightarrow 0}\varepsilon \log \mathbb{P}\Big(\|u^{\varepsilon}_{\eta}-v^{\varepsilon}_{\eta}\|_{L^1([0,1];L^1(\mathbb{T}^N))}>\iota\Big)=-\infty.
\end{eqnarray}
\end{proof}

\textbf{From now on, for the sake of simplicity, we denote by $u^{\varepsilon}=u^{\varepsilon}_{\eta}$ and $v^{\varepsilon}=v^{\varepsilon}_{\eta}$ when the initial value is not emphasized.}

\section{Proof of the main result}
Recall that $v^{\varepsilon}$ is the unique kinetic solution to (\ref{eq-7}). Applying Sections 6 and 7 in \cite{DHV} with $A=0, B=0$, we obtain that, for any $p\geq 1$,
\begin{eqnarray}\label{qq-28}
\sup_{0<\varepsilon\leq 1}\mathbb{E}\underset{0\leq s\leq 1}{{\rm{ess\sup}}}\ \|v^{\varepsilon}(s)\|^p_{L^p(\mathbb{T}^N)}<\infty.
\end{eqnarray}
Moreover, by Lemma \ref{lem-1}, there exists a kinetic measure $m^{\varepsilon}_2$ such that $f_2(x,t,\xi):=I_{v^{\varepsilon}(x,t)>\xi}$ satisfies that
for all $t\in [0,1)$ and $\varphi\in C^1_c(\mathbb{T}^N\times \mathbb{R})$,
\begin{eqnarray}\notag
\langle f^+_2(t),\varphi\rangle&=&\langle f_{2,0}, \varphi\rangle+\sqrt{\varepsilon}\sum_{k\geq 1}\int^t_0\int_{\mathbb{T}^N}\int_{\mathbb{R}}g_k(x,\xi)\varphi(x,\xi)d\nu^{2,\varepsilon}_{x,s}(\xi)dxd\beta_k(s)\\
\label{eq-15}
&& +\frac{\varepsilon}{2}\int^t_0\int_{\mathbb{T}^N}\int_{\mathbb{R}}\partial_{\xi}\varphi(x,\xi)G^2(x,\xi)d\nu^{2,\varepsilon}_{x,s}(\xi)dxds- \langle m^{\varepsilon}_2,\partial_{\xi} \varphi\rangle([0,t]), \quad  a.s.,
\end{eqnarray}
where $f_{2,0}=I_{\eta>\xi}$, $\nu^{2,\varepsilon}_{x,s}=-\partial_{\xi} f_2=\delta_{v^{\varepsilon}(x,s)=\xi}$ and we set $ f^+_2(1)=f_2(1)$.

Following the idea of Proposition 13 in \cite{D-V-1} and by utilizing the doubling variables method, we have the following result relating $u^{\varepsilon}$ and $v^{\varepsilon}$.
\begin{prp}\label{prp-1}
Assume Hypothesis H is in place. Let $u^{\varepsilon}$ and $v^{\varepsilon}$ be the kinetic solution of (\ref{eqq-5}) and (\ref{eq-7}), respectively. Then, for all $0< t< 1$, and non-negative test functions $\rho\in C^{\infty}(\mathbb{T}^N), \psi\in C^{\infty}_c(\mathbb{R})$, the corresponding functions $f_1(x,t,\xi):=I_{u^{\varepsilon}(x,t)>\xi}$ and $f_2(y,t,\zeta):=I_{v^{\varepsilon}(y,t)>\zeta}$ satisfy
\begin{eqnarray}\notag
&&\int_{(\mathbb{T}^N)^2}\int_{\mathbb{R}^2}\rho (x-y)\psi(\xi-\zeta)(f^{\pm}_1(x,s,\xi)\bar{f}^{\pm}_2(y,s,\zeta)+\bar{f}^{\pm}_1(x,s,\xi)f^{\pm}_2(y,s,\zeta))d\xi d\zeta dxdy\\
\notag
&\leq & \int_{(\mathbb{T}^N)^2}\int_{\mathbb{R}^2}\rho (x-y)\psi(\xi-\zeta)(f_{1,0}(x,\xi)\bar{f}_{2,0}(y,\zeta)+\bar{f}_{1,0}(x,\xi)f_{2,0}(y,\zeta))d\xi d\zeta dxdy\\
\label{eq-14-1}
&& +I(t)+J(t)+K(t), \quad a.s.,
\end{eqnarray}
where
\begin{eqnarray*}
I(t)&=&\varepsilon\int^t_0\int_{(\mathbb{T}^N)^2}\int_{\mathbb{R}^2}(f_1\bar{f}_2+\bar{f}_1f_2)(a (\xi)\cdot\nabla_x)\alpha d\xi d\zeta dxdyds\\
J(t)&=&\varepsilon\int^t_0\int_{(\mathbb{T}^N)^2}\int_{\mathbb{R}^2}\alpha \sum_{k\geq 1}|g_k(x,\xi)-g_k(y,\zeta)|^2d\nu^{1,\varepsilon}_{x,s}\otimes \nu^{2,\varepsilon}_{y,s}(\xi,\zeta)dxdyds\\
K(t)&=& 2\sqrt{\varepsilon}\sum_{k\geq 1}\int^t_0\int_{(\mathbb{T}^N)^2}\int_{\mathbb{R}^2}(g_k(x,\xi)-g_k(y,\zeta))\rho(x-y)\chi_1(\xi,\zeta)d\nu^{1,\varepsilon}_{x,s}\otimes \nu^{2,\varepsilon}_{y,s}(\xi,\zeta)dxdyd\beta_k(s),
\end{eqnarray*}
with $f_{1,0}(x,\xi)=I_{\eta(x)>\xi}, f_{2,0}(y,\zeta)=I_{\eta(y)>\zeta}$, $\alpha=\rho (x-y)\psi(\xi-\zeta)$, $\nu^{1,\varepsilon}_{x,s}=-\partial_{\xi}f_1(s,x,\xi)=\delta_{u^{\varepsilon}(x,s)=\xi}, \nu^{2,\varepsilon}_{y,s}=\partial_{\zeta}\bar{f}_2(s,y,\zeta)=\delta_{v^{\varepsilon}(y,s)=\zeta}$ and $\chi_1(\xi,\zeta)=\int^{\xi}_{-\infty}\psi(\xi'-\zeta)d\xi'=\int^{\xi-\zeta}_{-\infty}\psi(y)dy$.
\end{prp}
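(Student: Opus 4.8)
The strategy is the classical Kruzhkov doubling-of-variables argument adapted to the kinetic/stochastic setting, exactly as in Proposition 13 of \cite{D-V-1}, but now carried out for two \emph{different} equations, namely (\ref{eqq-5}) and (\ref{eq-7}). First I would fix $0<t<1$ and work with the test functions $\varphi_1(x,\xi):=\rho(x-y)\chi_1(\xi,\zeta)$ (for frozen $(y,\zeta)$) in the weak formulation (\ref{eq-11}) for $f_1^+$, and symmetrically a test function built from $\rho$ and $\psi$ in the formulation (\ref{eq-15}) for $\bar f_2^+$. Since $\chi_1(\xi,\zeta)=\int_{-\infty}^{\xi-\zeta}\psi$ has $\partial_\xi\chi_1=\psi(\xi-\zeta)$, these test functions are not compactly supported in $\xi$; so the genuinely first technical step is a truncation/approximation argument (multiply by a cutoff $\theta_R(\xi)$, use that $m_1^\varepsilon, m_2^\varepsilon$ vanish for large $\xi$ via (\ref{equ-37}) and the moment bounds (\ref{qq-27}), (\ref{qq-28}), then send $R\to\infty$) to justify plugging in $\chi_1$-type test functions. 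This is routine but must be stated.

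Next I would apply the It\^o product rule to the semimartingale
$t\mapsto \langle f_1^+(t),\varphi_1\rangle\,\langle \bar f_2^+(t),\varphi_2\rangle$, integrate the resulting identity against $\rho(x-y)$ and $\psi(\xi-\zeta)\,d\zeta\,dy$ (equivalently, test the doubled equation against $\alpha=\rho(x-y)\psi(\xi-\zeta)$), and symmetrize in the two solutions to also produce the $\bar f_1 f_2$ term. The drift terms from the flux in (\ref{eqq-5}) give $I(t)$; the It\^o correction terms combine the two $\frac\varepsilon2 G^2$ contributions with the quadratic covariation of the two martingale parts to yield, after the pointwise identity $\sum_k|g_k(x,\xi)|^2+\sum_k|g_k(y,\zeta)|^2-2\sum_k g_k(x,\xi)g_k(y,\zeta)=\sum_k|g_k(x,\xi)-g_k(y,\zeta)|^2$ evaluated against $\nu^{1,\varepsilon}_{x,s}\otimes\nu^{2,\varepsilon}_{y,s}$, exactly $J(t)$; the cross stochastic integrals assemble into $K(t)$. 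The key sign input is that the kinetic measures $m_1^\varepsilon,m_2^\varepsilon$ are \emph{non-negative}: the terms $-\langle m_1^\varepsilon,\partial_\xi\varphi_1\rangle$ and $-\langle m_2^\varepsilon,\partial_\zeta\varphi_2\rangle$, after pairing against $\bar f_2^+\ge 0$ resp.\ $f_1^+\ge 0$ and against $\psi\ge0,\rho\ge0$, have the correct sign to be dropped, producing the inequality "$\le$" rather than an identity. One must also handle the at-most-countably-many jump times of $f_1,f_2$ (Proposition \ref{prp-3}): at such times the jump of the product is controlled again by non-negativity of $m_i^\varepsilon$, so it only helps the inequality; and the $\pm$ versions on the left follow from taking $f^+$ or the remark formula (\ref{e-80}) for $f^-$.

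The main obstacle, and the place where real care is needed, is the rigorous justification of the It\^o product rule in this infinite-dimensional, measure-valued setting: $f_1^+$ and $\bar f_2^+$ are only weak-* continuous (with jumps), so one cannot differentiate them pathwise. The clean route is to first smooth in $(x,\xi)$ (convolve the equations, as is done in \cite{D-V-1}), apply the finite-dimensional It\^o formula to the smoothed, now genuinely $C^1$-in-$t$ processes, then pass to the limit using the uniform moment bounds and the tightness of the kinetic measures. A secondary point requiring attention is that $\chi_1$ is bounded but $\partial_\xi\chi_1=\psi$ has compact support, so the flux term $\langle f_1, a(\xi)\cdot\nabla_x\varphi_1\rangle$ involves $a(\xi)$ only on the support of the derivatives and is finite by Hypothesis H and the polynomial-growth bound (\ref{qeq-22}) together with (\ref{qq-27}); this localization is what makes $I(t)$ well defined. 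Everything else — identifying the measures $\nu^{i,\varepsilon}_{x,s}=\delta_{u^\varepsilon=\xi}$ resp.\ $\delta_{v^\varepsilon=\zeta}$ and the elementary bracket computations — is bookkeeping that follows \cite{D-V-1} verbatim with $\varepsilon$ and $\sqrt\varepsilon$ inserted in the appropriate places.
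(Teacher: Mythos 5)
Your proposal is structurally the same as the paper's: pair the two kinetic formulations against a product test function, apply the It\^o product rule, symmetrize in the two solutions, specialize $\alpha=\rho(x-y)\psi(\xi-\zeta)$, read off $I$, $J$, $K$, and drop the kinetic-measure terms by positivity; you also correctly identify the algebraic identity $\sum_k|g_k(x,\xi)|^2+\sum_k|g_k(y,\zeta)|^2-2\sum_k g_k(x,\xi)g_k(y,\zeta)=\sum_k|g_k(x,\xi)-g_k(y,\zeta)|^2$ behind $J$, the emergence of $\chi_1$ from the stochastic cross-terms, and the role of $(\nabla_x+\nabla_y)\alpha=0$, $(\partial_\xi+\partial_\zeta)\alpha=0$. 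The one place you diverge — and in fact overshoot — is the ``main obstacle'' you raise concerning the It\^o product rule ``in the infinite-dimensional, measure-valued setting'' and the proposed fix of mollifying in $(x,\xi)$. The paper never needs this: it first pairs (\ref{eq-11}) and (\ref{eq-15}) with \emph{fixed} scalar test functions $\varphi_1\in C^1_c(\mathbb{T}^N_x\times\mathbb{R}_\xi)$, $\varphi_2\in C^1_c(\mathbb{T}^N_y\times\mathbb{R}_\zeta)$, so that $\langle f_1^+(t),\varphi_1\rangle$ and $\langle\bar f_2^+(t),\varphi_2\rangle$ are genuinely real-valued semimartingales (a continuous $L^2$-martingale $F_1$, resp.\ $\bar F_2$, plus a c\`adl\`ag finite-variation process). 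The product rule is then the scalar textbook one (Revuz--Yor Prop.~(4.5)) combined with Lebesgue--Stieltjes integration by parts for the two finite-variation factors — which is exactly where the $f^-$ formula (\ref{e-80}), the left/right continuous versions, and the atom of $m^*_i$ at $\{0\}$ enter, and which also handles the at-most-countable jump set from Proposition \ref{prp-3} without any separate discussion. The relaxation from $\varphi_1\otimes\varphi_2$ to $\alpha=\rho\psi$ is carried out at the very end by density and truncation, justified by (\ref{equ-37}), (\ref{qq-27}), (\ref{qq-28}); $\chi_1$ appears only a posteriori when simplifying $I_7,I_8,\bar I_7,\bar I_8$, not as the initial test function, so the non-compactness issue you flag does not arise in the form you describe. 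Also, the passage from $f^+$ to $f^{\pm}$ in the statement is obtained by taking $t_n\uparrow t$ in the $f^+$ inequality rather than by rerunning the argument. None of these points invalidate your plan — the mollification route would work — but the paper's scalar-semimartingale route is both lighter and already rigorous, and the sign argument for $I_5,I_6,\bar I_5,\bar I_6$ really goes through $(\partial_\xi+\partial_\zeta)\alpha=0$ followed by an integration by parts in $\xi$ or $\zeta$ producing the non-negative Young measures $\nu^{i,\varepsilon,\pm}$, not directly through the non-negativity of $f_1^{\pm}$ or $\bar f_2^{\pm}$.
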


\begin{proof}
Let $\varphi_1\in C^{1}_c(\mathbb{T}^N_x\times \mathbb{R}_{\xi})$ and
$\varphi_2\in C^{1}_c(\mathbb{T}^N_y\times \mathbb{R}_{\zeta})$.
For all $t\in (0,1)$, according to (\ref{eq-11}), it yields
\begin{eqnarray*}\notag
\langle f^+_1(t),\varphi_1\rangle=\langle m^*_1, \partial_{\xi}\varphi_1\rangle([0,t])+F_1(t), \quad  a.s.,
\end{eqnarray*}
with
 \begin{eqnarray*}
 \langle m^*_1, \partial_{\xi}\varphi_1\rangle([0,t])&=&\langle f_{1,0}, \varphi_1\rangle\delta_0([0,t])+\varepsilon\int^t_0\langle f_1(s), a(\xi)\cdot \nabla_x \varphi_1(s)\rangle ds\\
 && +\frac{\varepsilon}{2}\int^t_0\int_{\mathbb{T}^N}\int_{\mathbb{R}}\partial_{\xi}\varphi_1(x,\xi)G^2(x,\xi)d\nu^{1,\varepsilon}_{x,s}(\xi)dxds- \langle m^{\varepsilon}_1,\partial_{\xi} \varphi_1\rangle([0,t]),
 \end{eqnarray*}
 and
  \begin{eqnarray*}
 F_1(t)=\sqrt{\varepsilon}\sum_{k\geq 1}\int^t_0\int_{\mathbb{T}^N}\int_{\mathbb{R}}g_k(x,\xi)\varphi_1(x,\xi)d\nu^{1,\varepsilon}_{x,s}(\xi)dxd\beta_k(s).
 \end{eqnarray*}
Similarly, by utilizing (\ref{eq-15}), we have
\begin{eqnarray*}\notag
\langle \bar{f}^+_2(t),\varphi_2\rangle=\langle \bar{m}^*_2, \partial_{\zeta}\varphi_2\rangle([0,t])+\bar{F}_2(t),
\end{eqnarray*}
where
\begin{eqnarray*}\notag
\langle \bar{m}^*_2, \partial_{\zeta}\varphi_2\rangle([0,t])&=&\langle \bar{f}_{2,0}, \varphi_2\rangle\delta_0([0,t]) -\frac{\varepsilon}{2}\int^t_0\int_{\mathbb{T}^N}\int_{\mathbb{R}}\partial_{\zeta}\varphi_2(y,\zeta)G^2(y,\zeta)d\nu^{2,\varepsilon}_{y,s}(\zeta)dyds\\
&& +\langle m^{\varepsilon}_2,\partial_{\zeta} \varphi_2\rangle([0,t]),
\end{eqnarray*}
and
\begin{eqnarray*}
\bar{F}_2(t)=-\sqrt{\varepsilon}\sum_{k\geq 1}\int^t_0\int_{\mathbb{T}^N}\int_{\mathbb{R}}g_k(y,\zeta)\varphi_2(y,\zeta)d\nu^{2,\varepsilon}_{y,s}(\zeta)dyd\beta_k(s).
\end{eqnarray*}
Clearly, $F_1(t)$ and $\bar{F}_2(t)$ are continuous martingales, $t\mapsto\langle m^*_1, \partial_{\xi}\varphi_1\rangle([0,t])$ and  $t\mapsto\langle \bar{m}^*_2, \partial_{\zeta}\varphi_2\rangle([0,t])$ are functions of finite variation. Moreover, it is shown in Remark 12 of \cite{D-V-1} that $\langle m^*_1, \partial_{\xi}\varphi_1\rangle(\{0\})=\langle f_{1,0}, \varphi_1\rangle$ and $\langle \bar{m}^*_2, \partial_{\zeta}\varphi_2\rangle(\{0\})=\langle \bar{f}_{2,0}, \varphi_2\rangle$.

Denote the duality distribution over $\mathbb{T}^N_x\times \mathbb{R}_\xi\times \mathbb{T}^N_y\times \mathbb{R}_\zeta$ by $\langle\langle\cdot,\cdot\rangle\rangle$. Let $\alpha(x,\xi,y,\zeta)=\varphi_1(x,\xi)\varphi_2(y,\zeta)$.
Applying It\^{o} formula to $F_1(t)\bar{F}_2(t)$, it yields
\begin{eqnarray*}
  F_1(t)\bar{F}_2(t)=\int^t_0F_1(s)d\bar{F}_2(s)+\int^t_0\bar{F}_2(s)dF_1(s)+[F_1,\bar{F}_2]_t,
\end{eqnarray*}
where $[F_1,\bar{F}_2]_t$ is the quadratic variation of $F_1$ and $\bar{F}_2$ at time $t$. Moreover,
according to Proposition (4.5) on P6 in \cite{RY99} and by using integration by parts for $\langle m^*_1, \partial_{\xi}\varphi_1\rangle([0,t])\langle \bar{m}^*_2, \partial_{\zeta}\varphi_2\rangle([0,t])$, it yields that
\begin{eqnarray*}
\langle m^*_1, \partial_{\xi}\varphi_1\rangle([0,t])\langle \bar{m}^*_2, \partial_{\zeta}\varphi_2\rangle([0,t])&=&\langle m^*_1, \partial_{\xi}\varphi_1\rangle(\{0\})\langle \bar{m}^*_2, \partial_{\zeta}\varphi_2\rangle(\{0\})\\
&& +\int_{(0,t]}\langle m^*_1, \partial_{\xi}\varphi_1\rangle([0,s))d\langle \bar{m}^*_2, \partial_{\zeta}\varphi_2\rangle(s)\\
&& +\int_{(0,t]}\langle \bar{m}^*_2, \partial_{\zeta}\varphi_2\rangle([0,s])d\langle m^*_1, \partial_{\xi}\varphi_1\rangle(s).
\end{eqnarray*}
Since $\bar{F}_2$ is continuous, we have
\begin{eqnarray*}
\langle m^*_1, \partial_{\xi}\varphi_1\rangle([0,t])\bar{F}_2(t)=\int^t_0\langle m^*_1, \partial_{\xi}\varphi_1\rangle([0,s])d\bar{F}_2(s)+\int^t_0\bar{F}_2(s)\langle m^*_1, \partial_{\xi}\varphi_1\rangle(ds),
\end{eqnarray*}
and we have the similar formula for $\langle \bar{m}^*_2, \partial_{\zeta}\varphi_2\rangle([0,t])F_2(t)$.

Based on the above formulas and using (\ref{e-80}), we obtain that
\begin{eqnarray*}
\langle f^+_1(t), \varphi_1\rangle\langle \bar{f}^+_2(t),\varphi_2 \rangle=\langle\langle f^+_1(t)\bar{f}^+_2(t), \alpha \rangle\rangle
\end{eqnarray*}
satisfies
\begin{eqnarray}\notag
\langle\langle f^+_1(t)\bar{f}^+_2(t), \alpha \rangle\rangle
&=& \langle\langle f_{1,0}\bar{f}_{2,0}, \alpha \rangle\rangle
+\varepsilon\int^t_0\int_{(\mathbb{T}^N)^2}\int_{\mathbb{R}^2}f_1\bar{f}_2(a(\xi)\cdot \nabla_x) \alpha d\xi d\zeta dxdyds\\ \notag
 && -\frac{\varepsilon}{2}\int^t_0\int_{(\mathbb{T}^N)^2}\int_{\mathbb{R}^2}f_1(s,x,\xi)\partial_{\zeta}\alpha G^2(y,\zeta)d\xi d\nu^{2,\varepsilon}_{y,s}(\zeta)dxdyds\\ \notag
 && +\frac{\varepsilon}{2}\int^t_0\int_{(\mathbb{T}^N)^2}\int_{\mathbb{R}^2}\bar{f}_2(s,y,\zeta)\partial_{\xi}\alpha G^2(x,\xi)d\zeta d\nu^{1,\varepsilon}_{x,s}(\xi)dxdyds\\ \notag
  && -\varepsilon \int^t_0\int_{(\mathbb{T}^N)^2}\int_{\mathbb{R}^2}G_{1,2}(x,y,\xi,\zeta)\alpha d\nu^{1,\varepsilon}_{x,s}\otimes \nu^{2,\varepsilon}_{y,s}(\xi,\zeta)dxdyds\\ \notag
 && +\int_{(0,t]}\int_{(\mathbb{T}^N)^2}\int_{\mathbb{R}^2}f^{-}_1(s,x,\xi)\partial_{\zeta} \alpha dm^{\varepsilon}_2(y,\zeta,s)d\xi dx\\ \notag
 && -\int_{(0,t]}\int_{(\mathbb{T}^N)^2}\int_{\mathbb{R}^2}\bar{f}^+_2(s,y,\zeta)\partial_{\xi} \alpha dm^{\varepsilon}_1(x,\xi,s)d\zeta dy\\ \notag
 && -\sqrt{\varepsilon}\sum_{k\geq 1}\int^t_0\int_{(\mathbb{T}^N)^2}\int_{\mathbb{R}^2}f_1(s,x,\xi)g_k(y,\zeta)\alpha d\nu^{2,\varepsilon}_{y,s}(\zeta)dydxd\xi d\beta_k(s)\\ \notag
 && +\sqrt{\varepsilon}\sum_{k\geq 1}\int^t_0\int_{(\mathbb{T}^N)^2}\int_{\mathbb{R}^2}\bar{f}_2(s,y,\zeta)g_k(x,\xi)\alpha d\nu^{1,\varepsilon}_{x,s}(\xi)dydxd\zeta d\beta_k(s)\\
\label{P-10}
&=:& \langle\langle f_{1,0}\bar{f}_{2,0}, \alpha \rangle\rangle+\sum^8_{i=1}I_i(t), \quad a.s.,
\end{eqnarray}
where $G^2(x,\xi)=\sum_{k\geq 1}|g_k(x,\xi)|^2$ and $G_{1,2}(x,\xi,y,\zeta)=\sum_{k\geq 1}g_k(x,\xi)g_k(y,\zeta)$.

Similarly, we  have
\begin{eqnarray}\notag
\langle\langle \bar{f}^+_1(t)f^+_2(t), \alpha \rangle\rangle
&=& \langle\langle \bar{f}_{1,0}f_{2,0}, \alpha \rangle\rangle
+\varepsilon\int^t_0\int_{(\mathbb{T}^N)^2}\int_{\mathbb{R}^2}\bar{f}_1{f}_2(a(\xi)\cdot \nabla_x) \alpha d\xi d\zeta dxdyds\\ \notag
 && +\frac{\varepsilon}{2}\int^t_0\int_{(\mathbb{T}^N)^2}\int_{\mathbb{R}^2}\bar{f}_1(s,x,\xi)\partial_{\zeta}\alpha G^2(y,\zeta)d\xi d\nu^{2,\varepsilon}_{y,s}(\zeta)dxdyds\\ \notag
 && -\frac{\varepsilon}{2}\int^t_0\int_{(\mathbb{T}^N)^2}\int_{\mathbb{R}^2}{f}_2(s,y,\zeta)\partial_{\xi}\alpha G^2(x,\xi)d\zeta d\nu^{1,\varepsilon}_{x,s}(\xi)dxdyds\\ \notag
  && -\varepsilon \int^t_0\int_{(\mathbb{T}^N)^2}\int_{\mathbb{R}^2}G_{1,2}(x,y,\xi,\zeta)\alpha d\nu^{1,\varepsilon}_{x,s}\otimes \nu^{2,\varepsilon}_{y,s}(\xi,\zeta)dxdyds\\ \notag
 && -\int_{(0,t]}\int_{(\mathbb{T}^N)^2}\int_{\mathbb{R}^2}\bar{f}^+_1(s,x,\xi)\partial_{\zeta} \alpha dm^{\varepsilon}_2(y,\zeta,s)d\xi dx\\ \notag
 && +\int_{(0,t]}\int_{(\mathbb{T}^N)^2}\int_{\mathbb{R}^2}{f}^{-}_2(s,y,\zeta)\partial_{\xi} \alpha dm^{\varepsilon}_1(x,\xi,s)d\zeta dy\\ \notag
 && +\sqrt{\varepsilon}\sum_{k\geq 1}\int^t_0\int_{(\mathbb{T}^N)^2}\int_{\mathbb{R}^2}\bar{f}_1(s,x,\xi)g_k(y,\zeta)\alpha d\nu^{2,\varepsilon}_{y,s}(\zeta)dydxd\xi d\beta_k(s)\\ \notag
 && -\sqrt{\varepsilon}\sum_{k\geq 1}\int^t_0\int_{(\mathbb{T}^N)^2}\int_{\mathbb{R}^2}{f}_2(s,y,\zeta)g_k(x,\xi)\alpha d\nu^{1,\varepsilon}_{x,s}(\xi)dydxd\zeta d\beta_k(s)\\
\label{P-10-1}
&=:& \langle\langle \bar{f}_{1,0}{f}_{2,0}, \alpha \rangle\rangle+\sum^8_{i=1}\bar{I}_i(t), \quad a.s..
\end{eqnarray}
Noting that $C^{1}_c(\mathbb{T}^N_x\times \mathbb{R}_\xi)\otimes C^{1}_c( \mathbb{T}^N_y\times \mathbb{R}_\zeta)$ is dense in $C^{1}_c(\mathbb{T}^N_x\times \mathbb{R}_\xi\times \mathbb{T}^N_y\times \mathbb{R}_\zeta)$ and the assumption that $\alpha$ is compactly supported can be relaxed thanks to (\ref{equ-37}), (\ref{qq-27}) and (\ref{qq-28}). By truncation, we can take $\alpha\in C^{\infty}_b(\mathbb{T}^N_x\times \mathbb{R}_\xi\times \mathbb{T}^N_y\times \mathbb{R}_\zeta)$ compactly supported in a neighbourhood of the diagonal
\[
\Big\{(x,\xi,x,\xi); x\in \mathbb{T}^N, \xi\in \mathbb{R}\Big\},
\]
with the form $\alpha=\rho(x-y)\psi(\xi-\zeta)$, which implies the following remarkable identities
\begin{eqnarray}\label{P-11}
(\nabla_x+\nabla_y)\alpha=0, \quad (\partial_{\xi}+\partial_{\zeta})\alpha=0.
\end{eqnarray}

From now on, we devote to making estimates of $I_i, \bar{I}_i,$ for $i=1,\cdot\cdot\cdot, 8$. Clearly, it holds that
\begin{eqnarray*}
I_1(t)+\bar{I}_1(t)&=&\varepsilon\int^t_0\int_{(\mathbb{T}^N)^2}\int_{\mathbb{R}^2}(f_1\bar{f}_2+\bar{f}_1{f}_2)(a(\xi)\cdot \nabla_x) \alpha d\xi d\zeta dxdyds\\
&=:&I(t).
\end{eqnarray*}
{\color{rr}In view of (\ref{P-11}), it holds that
\begin{eqnarray*}
I_5&=&-\int_{(0,t]}\int_{(\mathbb{T}^N)^2}\int_{\mathbb{R}^2}f^-_1(s,x,\xi)\partial_{\xi} \alpha dm^{\varepsilon}_2(y,\zeta,s)d\xi dx\\
&=&-\int_{(0,t]}\int_{(\mathbb{T}^N)^2}\int_{\mathbb{R}^2} \alpha dm^{\varepsilon}_2(y,\zeta,s)d\nu^{1,\varepsilon,-}_{x,s}(\xi)\leq 0, \quad a.s.,
\end{eqnarray*}
and
\begin{eqnarray*}
I_6&=&\int_{(0,t]}\int_{(\mathbb{T}^N)^2}\int_{\mathbb{R}^2}\bar{f}^+_2(s,y,\zeta)\partial_{\zeta} \alpha dm^{\varepsilon}_1(x,\xi,s)d\zeta dy\\
&=&-\int_{(0,t]}\int_{(\mathbb{T}^N)^2}\int_{\mathbb{R}^2} \alpha dm^{\varepsilon}_1(x,\xi,s)d\nu^{2,\varepsilon,+}_{y,s}(\zeta)\leq 0, \quad a.s..
\end{eqnarray*}
By the same method as above, we deduce that $\bar{I}_5+ \bar{I}_6\leq 0$, a.s..
}

Moreover,
it is readily to deduce that
\begin{eqnarray*}
I_2+I_3+I_4&=&\bar{I}_2+\bar{I}_3+\bar{I}_4\\
&=&\frac{\varepsilon}{2}\int^t_0\int_{(\mathbb{T}^N)^2}\int_{\mathbb{R}^2}\alpha (G^2(x,\xi)+G^2(y,\zeta)-2G_{1,2}(x,y,\xi,\zeta))d\nu^{1,\varepsilon}_{x,s}\otimes \nu^{2,\varepsilon}_{y,s}(\xi,\zeta)dxdyds\\
&=& \frac{\varepsilon}{2}\int^t_0\int_{(\mathbb{T}^N)^2}\int_{\mathbb{R}^2}\alpha \sum_{k\geq 1}|g_k(x,\xi)-g_k(y,\zeta)|^2d\nu^{1,\varepsilon}_{x,s}\otimes \nu^{2,\varepsilon}_{y,s}(\xi,\zeta)dxdyds,
\end{eqnarray*}
hence,
\[
\sum^{4}_{i=2}(I_i+\bar{I}_i)=\varepsilon\int^t_0\int_{(\mathbb{T}^N)^2}\int_{\mathbb{R}^2}\alpha \sum_{k\geq 1}|g_k(x,\xi)-g_k(y,\zeta)|^2d\nu^{1,\varepsilon}_{x,s}\otimes \nu^{2,\varepsilon}_{y,s}(\xi,\zeta)dxdyds=:J(t).
\]
Define  $\chi_1(\xi,\zeta)=\int^{\xi}_{-\infty}\psi(\xi'-\zeta)d\xi'$, then
{\color{rr}
\begin{eqnarray*}
I_7(t)&=&-\sqrt{\varepsilon}\sum_{k\geq 1}\int^t_0\int_{(\mathbb{T}^N)^2}\int_{\mathbb{R}^2}f_1(s,x,\xi)g_k(y,\zeta)\rho(x-y)\partial_{\xi}\chi_1(\xi,\zeta) d\nu^{2,\varepsilon}_{y,s}(\zeta)dydxd\xi d\beta_k(s)\\
&=&-\sqrt{\varepsilon}\sum_{k\geq 1}\int^t_0\int_{(\mathbb{T}^N)^2}\int_{\mathbb{R}^2}g_k(y,\zeta)\rho(x-y)\chi_1(\xi,\zeta)d\nu^{1,\varepsilon}_{x,s}\otimes \nu^{2,\varepsilon}_{y,s}(\xi,\zeta)dxdyd\beta_k(s).
\end{eqnarray*}
}
Define $\chi_2(\xi,\zeta)=\int^{+\infty}_{\zeta}\psi(\xi-\zeta')d\zeta'$, then
{\color{rr}
\begin{eqnarray*}
I_8(t)&=&-\sqrt{\varepsilon}\sum_{k\geq 1}\int^t_0\int_{(\mathbb{T}^N)^2}\int_{\mathbb{R}^2}\bar{f}_2(s,y,\zeta)g_k(x,\xi)\rho(x-y)\partial_{\zeta}\chi_2(\xi,\zeta)d\nu^{1,\varepsilon}_{x,s}(\xi)dydxd\zeta d\beta_k(s)\\
&=&\sqrt{\varepsilon}\sum_{k\geq 1}\int^t_0\int_{(\mathbb{T}^N)^2}\int_{\mathbb{R}^2}g_k(x,\xi)\rho(x-y)\chi_2(\xi,\zeta)d\nu^{1,\varepsilon}_{x,s}\otimes \nu^{2,\varepsilon}_{y,s}(\xi,\zeta)dxdyd\beta_k(s).
\end{eqnarray*}
}
Since $\chi_1(\xi,\zeta)=\chi_2(\xi,\zeta)=\int^{\xi-\zeta}_{-\infty}\psi(y)dy$, we get
\begin{eqnarray*}
I_7(t)+I_8(t)
=\sqrt{\varepsilon}\sum_{k\geq 1}\int^t_0\int_{(\mathbb{T}^N)^2}\int_{\mathbb{R}^2}(g_k(x,\xi)-g_k(y,\zeta))\rho(x-y)\chi_1(\xi,\zeta)d\nu^{1,\varepsilon}_{x,s}\otimes \nu^{2,\varepsilon}_{y,s}(\xi,\zeta)dxdyd\beta_k(s).
\end{eqnarray*}
Similarly, we deduce that
\begin{eqnarray*}
\bar{I}_7(t)+\bar{I}_8(t)
=\sqrt{\varepsilon}\sum_{k\geq 1}\int^t_0\int_{(\mathbb{T}^N)^2}\int_{\mathbb{R}^2}(g_k(x,\xi)-g_k(y,\zeta))\rho(x-y)\chi_1(\xi,\zeta)d\nu^{1,\varepsilon}_{x,s}\otimes \nu^{2,\varepsilon}_{y,s}(\xi,\zeta)dxdyd\beta_k(s).
\end{eqnarray*}
Thus, it yields
\begin{eqnarray*}
\sum^8_{i=7}(I_i+\bar{I}_i)&=&2\sqrt{\varepsilon}\sum_{k\geq 1}\int^t_0\int_{(\mathbb{T}^N)^2}\int_{\mathbb{R}^2}(g_k(x,\xi)-g_k(y,\zeta))\rho(x-y)\chi_1(\xi,\zeta)d\nu^{1,\varepsilon}_{x,s}\otimes \nu^{2,\varepsilon}_{y,s}(\xi,\zeta)dxdyd\beta_k(s)\\
&=:&K(t).
\end{eqnarray*}
Combining all the previous estimates, it follows that
\begin{eqnarray}\notag
&&\int_{(\mathbb{T}^N)^2}\int_{\mathbb{R}^2}\rho (x-y)\psi(\xi-\zeta)(f^+_1(x,t,\xi)\bar{f}^+_2(y,t,\zeta)+\bar{f}^+_1(x,t,\xi)f^+_2(y,t,\zeta))d\xi d\zeta dxdy\\
\notag
&\leq & \int_{(\mathbb{T}^N)^2}\int_{\mathbb{R}^2}\rho (x-y)\psi(\xi-\zeta)(f_{1,0}(x,\xi)\bar{f}_{2,0}(y,\zeta)+\bar{f}_{1,0}(x,\xi)f_{2,0}(y,\zeta))d\xi d\zeta dxdy\\
\label{eq-14}
&&\ +I(t)+J(t)+K(t), \quad a.s..
\end{eqnarray}
Taking $t_n\uparrow t$, we have (\ref{eq-14}) holds for $f^+_i(t_n)$ and let $n\rightarrow \infty$, we get (\ref{eq-14}) holds for $f^-_i(t)$. We complete the proof.

\end{proof}

Now, we are ready to proceed with the proof of (\ref{eq-8}), which implies the main result Theorem \ref{thm-1}.
\begin{prp}\label{prp-2}
 For any $\iota>0$, it holds that
\begin{eqnarray}
\lim_{\varepsilon\rightarrow 0}\varepsilon \log \mathbb{P}\left(\|u^{\varepsilon}-v^{\varepsilon}\|_{L^1([0,1];L^1(\mathbb{T}^N))}>\iota\right)=-\infty.
\end{eqnarray}
\end{prp}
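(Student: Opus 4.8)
The plan is to run a Gr\"onwall-type argument on the quantity appearing on the left side of Proposition~\ref{prp-1}, using the doubling-of-variables estimate~(\ref{eq-14-1}) as the starting point. First I would observe that, for fixed test functions $\rho,\psi$ concentrated near the origin, the left-hand side of~(\ref{eq-14-1}) is, up to an error controlled by the moduli of continuity of $\rho$ and $\psi$ at scale of their supports, an approximation of $\int_{\mathbb{T}^N}|u^{\varepsilon}(x,t)-v^{\varepsilon}(x,t)|\,dx$; the classical kinetic identity $\int_{\mathbb{R}}(f\bar{g}+\bar{f}g)\,d\xi=|a-b|$ for $f=I_{a>\xi}$, $g=I_{b>\xi}$ is the model fact here. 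Since $\eta$ is the common initial datum, the initial term on the right of~(\ref{eq-14-1}) is controlled by $\int_{(\mathbb{T}^N)^2}\rho(x-y)|\eta(x)-\eta(y)|\,dx\,dy$ plus a $\psi$-dependent error; this tends to $0$ as $\rho$ approaches a Dirac mass (using only $\eta\in L^\infty\subset L^1$ and continuity of translation in $L^1$), so it can be absorbed into the final $\iota$-budget.

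Next I would estimate the three driving terms. The term $I(t)$ carries a prefactor $\varepsilon$ and an integrand bounded, via Hypothesis~H and~(\ref{qq-27})--(\ref{qq-28}), by $\varepsilon\cdot(\text{polynomial in }\|u^{\varepsilon}\|,\|v^{\varepsilon}\|)\cdot\|\nabla\rho\|$, so after choosing $\rho$ (hence $\|\nabla\rho\|$) and then sending $\varepsilon\to0$ it is negligible; one must be a little careful because $\|\nabla\rho\|$ blows up as $\rho\to\delta$, so the order of limits is: first fix $\rho,\psi$, exploit the $\varepsilon$ prefactor, then refine $\rho,\psi$. The term $J(t)$, by~(\ref{equ-29}), is bounded by $\varepsilon D_1\int_0^t\!\int(|x-y|^2+|\xi-\zeta|^2)\alpha\,d\nu\otimes\nu\,dx\,dy\,ds$, which is again $O(\varepsilon)$ times something controlled by the supports of $\rho,\psi$ (the $|x-y|^2,|\xi-\zeta|^2$ factors actually help), hence negligible. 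The martingale term $K(t)$ is the genuine obstacle: it is a stochastic integral with quadratic variation bounded, again by~(\ref{equ-29}) and the structure of $\chi_1$, by $\varepsilon\int_0^t(\text{something like }\int\alpha|x-y|\cdot\text{stuff})^2\,ds$; the key point is that the integrand vanishes on the diagonal, so by Cauchy--Schwarz and the support of $\rho$ the bracket is $O(\varepsilon)$ times a power of the solution norms times a small parameter. To pass from an $L^2$/bracket bound to an exponential tail estimate of the form $\varepsilon\log\mathbb{P}(\sup_t|K(t)|>\delta)\to-\infty$, I would use the exponential martingale (Bernstein-type) inequality: for a continuous martingale $M$ with $\langle M\rangle_t\le C\varepsilon$ one has $\mathbb{P}(\sup_{t\le1}|M_t|>\delta)\le 2\exp(-\delta^2/(2C\varepsilon))$, so $\varepsilon\log\mathbb{P}(\cdots)\le -\delta^2/(2C)+o(1)$, and since $\delta$ can be taken as large as we like relative to the other (deterministic, $\varepsilon$-free after optimization) contributions, the limit is $-\infty$. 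Care is needed because the bracket bound itself depends on $\sup_s\|u^\varepsilon(s)\|_{L^p}$, which is only bounded in expectation, not almost surely; so I would first localize on the event $\{\sup_s\|u^\varepsilon(s)\|_{L^p}\le R,\ \sup_s\|v^\varepsilon(s)\|_{L^p}\le R\}$, show via~(\ref{qq-27})--(\ref{qq-28}) and Markov that the complement has probability decaying fast enough (indeed one needs superexponential decay in $1/\varepsilon$, which requires either higher moments or a separate exponential moment estimate on the solution norms — this is the delicate bookkeeping the authors allude to as ``highly nontrivial''), and run the martingale inequality on the localized process.

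Assembling the pieces: fix $\iota>0$. Choose $\rho,\psi$ close enough to Dirac masses that the $\rho,\psi$-dependent deterministic errors (initial-data term, diagonal-approximation error relating the left side of~(\ref{eq-14-1}) to $\|u^\varepsilon(t)-v^\varepsilon(t)\|_{L^1}$) are below $\iota/4$; then for that fixed $\rho,\psi$, the terms $I(t)$ and $J(t)$ are $\le\iota/4$ with probability superexponentially close to $1$ once $\varepsilon$ is small (using the $\varepsilon$ prefactor together with the localization on solution norms); finally, integrating~(\ref{eq-14-1}) in $t$ over $[0,1]$ and applying the exponential martingale estimate to $K$, we get $\mathbb{P}(\|u^\varepsilon-v^\varepsilon\|_{L^1([0,1];L^1)}>\iota)\le \mathbb{P}(\text{bad localization event})+2\exp(-c/\varepsilon)$, whence $\varepsilon\log\mathbb{P}(\cdots)\to-\infty$. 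By Theorem~4.2.13 in~\cite{DZ}, the exponential equivalence of $\mu^\varepsilon_\eta$ and $\vartheta^\varepsilon_\eta$ follows, completing the proof of Theorem~\ref{thm-1}. The main obstacle, as flagged, is making the martingale and error estimates yield genuinely superexponential (rather than merely polynomial or exponential) smallness in $1/\varepsilon$; this forces one to combine the exponential martingale inequality with a localization argument and, crucially, with exponential-type moment bounds on $\sup_s\|u^\varepsilon(s)\|_{L^p}$ that go beyond the polynomial-moment estimates~(\ref{qq-27})--(\ref{qq-28}) recalled from~\cite{DHV}.
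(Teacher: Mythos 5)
Your decomposition of the doubling-of-variables inequality into an initial-data term, a flux term $I$, a variance term $J$, and a martingale term $K$, and your handling of the first three, track the paper closely: the paper also uses mollifiers $\rho_\gamma,\psi_\delta$, bounds the initial and diagonal-approximation errors by a quantity $\mathcal{E}_0(\gamma,\delta)\to 0$, and shows $\tilde I,\tilde J$ are $O(\varepsilon)$ with explicit dependence on $\gamma,\delta$. Your route for the martingale term, however, diverges from the paper's, and this is where the proposal breaks.

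The paper does \emph{not} use an exponential martingale (Bernstein) inequality. It takes $p$-th moments and applies the Davis/Barlow--Yor inequality $\mathbb{E}|M^*_t|^p\le C_0^{p/2}p^{p/2}\mathbb{E}\langle M\rangle_t^{p/2}$, whose sharp constant $p^{p/2}$ is the crux: since $\tilde K$ carries a prefactor $\sqrt\varepsilon$, setting $p=1/\varepsilon$ makes $\varepsilon^{p/2}p^{p/2}=1$, and after a Gr\"onwall iteration \emph{at the level of $p$-th moments} (the Gr\"onwall exponential $e^{\mathcal{D}^2\varepsilon p C_0 D_2}$ also becomes an $\varepsilon$-free constant when $\varepsilon p=1$) one obtains
\[
\Big(\mathbb{E}\,\underset{s\le 1}{\mathrm{ess\,sup}}\,\|u^{\varepsilon,\pm}-v^{\varepsilon,\pm}\|_{L^1}^{\,p}\Big)^{1/p}\lesssim r(\varepsilon,p,\gamma,\delta)\to 0,
\]
so Chebyshev with this same $p$ gives $\varepsilon\log\mathbb{P}(\cdots)\le\log r(\varepsilon,\cdot)-\log\iota\to-\infty$. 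The key point is that the $L^p$ bound improves as $\varepsilon\to 0$ \emph{and} $p\to\infty$ simultaneously, which is strictly stronger than any fixed-exponent tail estimate.

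Your exponential-martingale route cannot deliver $-\infty$. On the localized event the bracket of $K$ is $\lesssim\varepsilon(\gamma+4\delta+R(r))^2$; even after a pathwise Gr\"onwall/stopping-time argument to ensure $R(r)\le\iota$, this gives a bracket of order $\varepsilon\iota^2$ and hence a tail bound $\mathbb{P}(\sup|K|>\iota/4)\lesssim\exp(-c/\varepsilon)$ with $c$ a fixed, $\iota$-independent constant. Then $\varepsilon\log\big(2e^{-c/\varepsilon}\big)\to -c$, a \emph{finite} negative number, not $-\infty$. Your remark that ``$\delta$ can be taken as large as we like'' confuses the role of the threshold: the threshold is pinned at the fixed fraction of $\iota$ under consideration and cannot be sent to infinity. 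Making $c\to\infty$ would require the bracket constant to be $o(\varepsilon)$ rather than $O(\varepsilon)$, and that is exactly what one cannot know a priori without the self-improving Gr\"onwall-at-moment-level mechanism the paper uses. Separately, your localization step would require the complement $\{\sup_s\|u^\varepsilon(s)\|_{L^p}>R\}$ to have superexponentially small probability in $1/\varepsilon$, which, as you yourself flag, needs exponential moments not provided by (\ref{qq-27})--(\ref{qq-28}); but even granting those, the first issue already blocks the conclusion. The ``higher moments'' alternative you mention in passing is in fact the correct one — it is precisely what the paper develops, with the specific coupling $p=1/\varepsilon$, $\gamma=\delta=\varepsilon^{1/2}$ — but you do not carry it out, so the proposal as written has a genuine gap.
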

\begin{proof}
Let $\rho_{\gamma}, \psi_{\delta}$ be approximations to the identity on $\mathbb{T}^N$ and $\mathbb{R}$, respectively. That is, let $\rho\in C^{\infty}(\mathbb{T}^N)$, $\psi\in C^{\infty}_c(\mathbb{R})$ be symmetric non-negative functions such as $\int_{\mathbb{T}^N}\rho =1$, $\int_{\mathbb{R}}\psi =1$ and supp$\psi \subset (-1,1)$. We define
\[
\rho_{\gamma}(x)=\frac{1}{\gamma^N}\rho\Big(\frac{x}{\gamma}\Big), \quad \psi_{\delta}(\xi)=\frac{1}{\delta}\psi\Big(\frac{\xi}{\delta}\Big).
\]
Letting $\rho:=\rho_{\gamma}(x-y)$ and $\psi:=\psi_{\delta}(\xi-\zeta)$ in Proposition \ref{prp-1}, we get from (\ref{eq-14-1}) that
\begin{eqnarray*}
&&\int_{(\mathbb{T}^N)^2}\int_{\mathbb{R}^2}\rho_{\gamma} (x-y)\psi_{\delta}(\xi-\zeta)(f^{\pm}_1(x,t,\xi)\bar{f}^{\pm}_2(y,t,\zeta)+\bar{f}^{\pm}_1(x,t,\xi)f^{\pm}_2(y,t,\zeta))d\xi d\zeta dxdy\\
&\leq & \int_{(\mathbb{T}^N)^2}\int_{\mathbb{R}^2}\rho_{\gamma} (x-y)\psi_{\delta}(\xi-\zeta)(f_{1,0}(x,\xi)\bar{f}_{2,0}(y,\zeta)+\bar{f}_{1,0}(x,\xi)f_{2,0}(y,\zeta))d\xi d\zeta dxdy\\
&&\  +\tilde{I}(t)+\tilde{J}(t)+\tilde{K}(t),\quad a.s.,
\end{eqnarray*}
where $\tilde{I}, \tilde{J}, \tilde{K}$ are the corresponding $I,J,K$ in the statement of Proposition \ref{prp-1} with $\rho$, $\psi$ replaced by $\rho_{\gamma}$, $\psi_{\delta}$, respectively. For simplicity, we still denote by $\chi_1(\xi,\zeta)$ with $\psi$ replaced by $\psi_{\delta}$.

For any $t\in [0,1]$, define the error term
\begin{eqnarray}\notag
&&\mathcal{E}_t(\gamma,\delta)\\ \notag
&:=&\int_{(\mathbb{T}^N)^2}\int_{\mathbb{R}^2}(f^{\pm}_1(x,t,\xi)\bar{f}^{\pm}_2(y,t,\zeta)+\bar{f}^{\pm}_1(x,t,\xi){f}^{\pm}_2(y,t,\zeta))\rho_{\gamma}(x-y)\psi_{\delta}(\xi-\zeta)dxdyd\xi d\zeta\\
\label{qq-3}
&&-\int_{\mathbb{T}^N}\int_{\mathbb{R}}(f^{\pm}_1(x,t,\xi)\bar{f}^{\pm}_2(x,t,\xi)+\bar{f}^{\pm}_1(x,t,\xi)f^{\pm}_2(x,t,\xi))d\xi dx.
\end{eqnarray}
By utilizing $\int_{\mathbb{R}}\psi_{\delta}(\xi-\zeta)d\zeta=1$, $\int^{\xi}_{\xi-\delta}\psi_{\delta}(\xi-\zeta)d\zeta=\frac{1}{2}$ and $\int_{(\mathbb{T}^N)^2}\rho_{\gamma}(x-y)dxdy\leq1$, we deduce that
\begin{eqnarray}\notag
&&\Big|\int_{(\mathbb{T}^N)^2}\int_{\mathbb{R}}\rho_{\gamma}(x-y)f^{\pm}_1(x,t,\xi)\bar{f}^{\pm}_2(y,t,\xi)d\xi dxdy\\ \notag
&&-\int_{(\mathbb{T}^N)^2}\int_{\mathbb{R}^2}f^{\pm}_1(x,t,\xi)\bar{f}^{\pm}_2(y,t,\zeta)\rho_{\gamma}(x-y)\psi_{\delta}(\xi-\zeta)dxdyd\xi d\zeta\Big|\\ \notag
&=&\Big|\int_{(\mathbb{T}^N)^2}\rho_{\gamma}(x-y)\int_{\mathbb{R}}I_{u^{\varepsilon, \pm}(x,t)>\xi}\int_{\mathbb{R}}\psi_{\delta}(\xi-\zeta)(I_{v^{\varepsilon,\pm}(y,t)\leq\xi}-I_{v^{\varepsilon,\pm}(y,t)\leq \zeta})d\zeta d\xi dxdy\Big|\\ \notag
&\leq&\int_{(\mathbb{T}^N)^2}\int_{\mathbb{R}}\rho_{\gamma}(x-y)I_{u^{\varepsilon,\pm}(x,t)>\xi}\int^{\xi}_{\xi-\delta}\psi_{\delta}(\xi-\zeta)I_{\zeta<v^{\varepsilon,\pm}(y,t)\leq\xi} d\zeta d\xi dxdy\\ \notag
&&\ +\int_{(\mathbb{T}^N)^2}\int_{\mathbb{R}}\rho_{\gamma}(x-y)I_{u^{\varepsilon,\pm}(x,t)>\xi}\int^{\xi+\delta}_{\xi}\psi_{\delta}(\xi-\zeta)I_{\xi<v^{\varepsilon,\pm}(y,t)\leq\zeta} d\zeta d\xi dxdy\\ \notag
&\leq& {\color{rr}\frac{1}{2}\int_{(\mathbb{T}^N)^2}\rho_{\gamma}(x-y)I_{{u^{\varepsilon,\pm}(x,t)>v^{\varepsilon,\pm}(y,t)}}\int^{min\{u^{\varepsilon,\pm}(x,t),v^{\varepsilon,\pm}(y,t)+\delta\}}_{v^{\varepsilon,\pm}(y,t)}d\xi dxdy}\\ \notag
&&\ {\color{rr}+\frac{1}{2}\int_{(\mathbb{T}^N)^2}\rho_{\gamma}(x-y)I_{{v^{\varepsilon,\pm}(y,t)-\delta<u^{\varepsilon,\pm}(x,t)}}\int^{min\{u^{\varepsilon,\pm}(x,t),v^{\varepsilon,\pm}(y,t)\}}_{v^{\varepsilon,\pm}(y,t)-\delta}d\xi dxdy}\\ \notag
&=& {\color{rr}\frac{\delta}{2}\int_{(\mathbb{T}^N)^2}\rho_{\gamma}(x-y) I_{{ u^{\varepsilon,\pm}(x,t)>v^{\varepsilon,\pm}(y,t)+\delta }}dxdy}\\ \notag
&&{\color{rr}+\frac{1}{2}\int_{(\mathbb{T}^N)^2}\rho_{\gamma}(x-y)I_{{v^{\varepsilon,\pm}(y,t)< u^{\varepsilon,\pm}(x,t)\leq v^{\varepsilon,\pm}(y,t)+\delta }}(u^{\varepsilon,\pm}(x,t)-v^{\varepsilon,\pm}(y,t) dxdy}\\ \notag
&&{\color{rr}+\frac{\delta}{2}\int_{(\mathbb{T}^N)^2}\rho_{\gamma}(x-y)I_{{v^{\varepsilon,\pm}(y,t)<u^{\varepsilon,\pm}(x,t)}}dxdy}\\ \notag
&&{\color{rr}+\frac{1}{2}\int_{(\mathbb{T}^N)^2}\rho_{\gamma}(x-y)I_{{v^{\varepsilon,\pm}(y,t)-\delta<u^{\varepsilon,\pm}(x,t)\leq v^{\varepsilon,\pm}(y,t)}}(u^{\varepsilon,\pm}(x,t)-v^{\varepsilon,\pm}(y,t)+\delta) dxdy}\\
\label{e-23}
&\leq & 2\delta, \quad a.s..
\end{eqnarray}
Similarly,
\begin{eqnarray}\notag
&&\Big|\int_{(\mathbb{T}^N)^2}\int_{\mathbb{R}}\rho_{\gamma}(x-y)\bar{f}^{\pm}_1(x,t,\xi){f}^{\pm}_2(y,t,\xi)d\xi dxdy\\
\label{e-22}
&&-\int_{(\mathbb{T}^N)^2}\int_{\mathbb{R}^2}\bar{f}^{\pm}_1(x,t,\xi){f}^{\pm}_2(y,t,\zeta)\rho_{\gamma}(x-y)\psi_{\delta}(\xi-\zeta)dxdyd\xi d\zeta\Big|
\leq  2\delta, \quad a.s..
\end{eqnarray}
Moreover, it follows that
\begin{eqnarray}\notag
&&\Big|\int_{(\mathbb{T}^N)^2}\int_{\mathbb{R}}\rho_{\gamma}(x-y)f^{\pm}_1(x,t,\xi)\bar{f}^{\pm}_2(y,t,\xi)d\xi dxdy-\int_{\mathbb{T}^N}\int_{\mathbb{R}}f^{\pm}_1(x,t,\xi)\bar{f}^{\pm}_2(x,t,\xi)d\xi dx\Big|\\ \notag
&=&\Big|\int_{\mathbb{T}^N}\int_{|z|<\gamma}\int_{\mathbb{R}}\rho_{\gamma}(z)f^{\pm}_1(x,t,\xi)\bar{f}^{\pm}_2(x-z,t,\xi)d\xi dxdz-\int_{\mathbb{T}^N}\int_{|z|<\gamma}\int_{\mathbb{R}}\rho_{\gamma}(z)f^{\pm}_1(x,t,\xi)\bar{f}^{\pm}_2(x,t,\xi)d\xi dxdz\Big|\\ \notag
&\leq&\sup_{|z|<\gamma}\int_{\mathbb{T}^N}\int_{\mathbb{R}}f^{\pm}_1(x,t,\xi)|\bar{f}^{\pm}_2(x-z,t,\xi)-\bar{f}^{\pm}_2(x,t,\xi)|d\xi dx\\ \notag
&\leq& \sup_{|z|<\gamma}\int_{\mathbb{T}^N}\int_{\mathbb{R}}|-f^{\pm}_2(x-z,t,\xi)+I_{0>\xi}-I_{0>\xi}+f^{\pm}_2(x,t,\xi)|d\xi dx\\
\label{e-43}
&=& \sup_{|z|<\gamma}\int_{\mathbb{T}^N}\int_{\mathbb{R}}|\Lambda_{f^{\pm}_2}(x-z,t,\xi)-\Lambda_{f^{\pm}_2}(x,t,\xi)|d\xi dx,   \quad a.s..
\end{eqnarray}
In view of the integrability of $\Lambda_{f^{\pm}_2}$, it yields that for a countable sequence $\gamma_n\downarrow 0$, (\ref{e-43}) holds a.s. for all $n$, hence, passing to the limit $n\rightarrow \infty$, we get
\begin{eqnarray}\label{qq-1}
\lim_{n\rightarrow \infty}\Big|\int_{(\mathbb{T}^N)^2}\int_{\mathbb{R}}\rho_{\gamma}(x-y)f^{\pm}_1(x,t,\xi)\bar{f}^{\pm}_2(y,t,\xi)d\xi dxdy-\int_{\mathbb{T}^N}\int_{\mathbb{R}}f^{\pm}_1(x,t,\xi)\bar{f}^{\pm}_2(x,t,\xi)d\xi dx\Big|= 0, \ a.s..
\end{eqnarray}
Similarly, it holds that
\begin{eqnarray}\label{qq-2}
\lim_{n\rightarrow \infty}\Big|\int_{(\mathbb{T}^N)^2}\int_{\mathbb{R}}\rho_{\gamma}(x-y)\bar{f}^{\pm}_1(x,t,\xi)f^{\pm}_2(y,t,\xi)d\xi dxdy-\int_{\mathbb{T}^N}\int_{\mathbb{R}}\bar{f}^{\pm}_1(x,t,\xi)f^{\pm}_2(x,t,\xi)d\xi dx\Big|=0, \ a.s..
\end{eqnarray}
By a similar argument, passing to the limit $\delta\rightarrow 0$, it follows from  (\ref{e-23})-(\ref{qq-2}) that
\begin{eqnarray}\notag
\lim_{n\rightarrow \infty}\mathcal{E}_t(\gamma_n,\delta_n)=0, \quad a.s..
\end{eqnarray}
Without confusion, from now on, we write
\begin{eqnarray}\label{qq-4}
\lim_{\gamma, \delta\rightarrow 0}\mathcal{E}_t(\gamma,\delta)=0, \quad a.s..
\end{eqnarray}
In particular, when $t=0$, it holds that
\begin{eqnarray}\label{qq-5}
\lim_{\gamma, \delta\rightarrow 0}\mathcal{E}_0(\gamma,\delta)=0.
\end{eqnarray}
In the following, we aim to make estimates of $\tilde{I}(t)$, $\tilde{J}(t)$ and $\tilde{K}(t)$.
We start with the estimation of $\tilde{I}(t)$. Note that
\begin{eqnarray*}
\tilde{I}(t)&=& \varepsilon\int^t_0\int_{(\mathbb{T}^N)^2}\int_{\mathbb{R}^2}f_1\bar{f}_2(a(\xi)\cdot \nabla_x) \alpha d\xi d\zeta dxdyds\\
&& +\varepsilon\int^t_0\int_{(\mathbb{T}^N)^2}\int_{\mathbb{R}^2}\bar{f}_1{f}_2(a(\xi)\cdot \nabla_x) \alpha d\xi d\zeta dxdyds\\
&=:& \tilde{I}_1(t)+\tilde{I}_2(t).
\end{eqnarray*}
By Hypothesis H, we know that $a(\cdot)$ is polynomial growth with degree $q_0$, then $|a(\xi)|\leq \mathcal{N}(q_0)(1+|\xi|^{q_0})$ with $\mathcal{N}(q_0)<\infty$. As a result, it yields
\begin{eqnarray*}
|\tilde{I}_1(t)|\leq\varepsilon \mathcal{N}(q_0)\int^t_0\int_{(\mathbb{T}^N)^2}\int_{\mathbb{R}^2}f_1\bar{f}_2(1+|\xi|^{q_0})\psi_{\delta}(\xi-\zeta)|\nabla_x \rho_{\gamma}(x-y)| d\xi d\zeta dxdyds, \quad a.s..
\end{eqnarray*}
Define
\[
\Gamma(\xi,\zeta)=\int^{\infty}_{\zeta}\int^{\xi}_{-\infty}(1+|\xi'|^{q_0})\psi_{\delta}(\xi'-\zeta')d\xi'd\zeta',
\]
then
\begin{eqnarray*}
|\tilde{I}_1(t)|\leq \varepsilon \mathcal{N}(q_0)\int^t_0\int_{(\mathbb{T}^N)^2}|\nabla_x \rho_{\gamma}(x-y)| \int_{\mathbb{R}^2}\Gamma(\xi,\zeta)d\nu^{1,\varepsilon}_{x,s}\otimes \nu^{2,\varepsilon}_{y,s}(\xi,\zeta) dxdyds, \quad a.s..
\end{eqnarray*}
Clearly, it yields
\begin{eqnarray*}
\Gamma(\xi,\zeta)&\leq&\int^{\infty}_{\zeta}\int_{|\xi''|<\delta, \xi''<\xi-\zeta'}(1+|\xi''|^{q_0}+|\zeta'|^{q_0})\psi_{\delta}(\xi'')d\xi''d\zeta'\\
&\leq& \int^{\xi+\delta}_{\zeta}(1+|\delta|^{q_0}+|\zeta'|^{q_0})\Big(\int_{\mathbb{R}}\psi_{\delta}(\xi'')d\xi''\Big)d\zeta'\\
&\leq& \int^{\xi+\delta}_{\zeta}(1+|\delta|^{q_0}+|\zeta'|^{q_0})d\zeta'\\
&\leq& C(q_0)(1+|\xi|^{q_0+1}+|\zeta|^{q_0+1}+|\delta|^{q_0+1}).
\end{eqnarray*}
Then, we deduce that
\begin{eqnarray*}
&&|\tilde{I}_1(t)|\\
&\leq& \varepsilon \mathcal{N}(q_0)C(q_0)\int^t_0\int_{(\mathbb{T}^N)^2}|\nabla_x \rho_{\gamma}(x-y)| \int_{\mathbb{R}^2}(1+|\xi|^{q_0+1}+|\zeta|^{q_0+1}+|\delta|^{q_0+1})d\nu^{1,\varepsilon}_{x,s}\otimes \nu^{2,\varepsilon}_{y,s}(\xi,\zeta)d\xi d\zeta dxdyds\\
&\leq& \varepsilon \gamma^{-1}\mathcal{N}(q_0)C(q_0)(1+|\delta|^{q_0+1})\\
&& +\varepsilon \gamma^{-1}\mathcal{N}(q_0)C(q_0)\Big(\underset{0\leq s\leq t}{{\rm{ess\sup}}}\ \|u^{\varepsilon}(s)\|^{q_0+1}_{L^{q_0+1}(\mathbb{T}^N)}+\underset{0\leq s\leq t}{{\rm{ess\sup}}}\ \|v^{\varepsilon}(s)\|^{q_0+1}_{L^{q_0+1}(\mathbb{T}^N)}\Big), \quad a.s..
\end{eqnarray*}
For $\tilde{I}_2(t)$, we have the same estimation as $\tilde{I}_1(t)$. Hence, we conclude that
\begin{eqnarray}\notag
|\tilde{I}(t)|&\leq& 2\varepsilon \gamma^{-1}\mathcal{N}(q_0)C(q_0)(1+|\delta|^{q_0+1})\\
\label{qq-29}
&&+2\varepsilon \gamma^{-1}\mathcal{N}(q_0)C(q_0)\Big(\underset{0\leq s\leq t}{{\rm{ess\sup}}}\ \|u^{\varepsilon}(s)\|^{q_0+1}_{L^{q_0+1}(\mathbb{T}^N)}+\underset{0\leq s\leq t}{{\rm{ess\sup}}}\ \|v^{\varepsilon}(s)\|^{q_0+1}_{L^{q_0+1}(\mathbb{T}^N)}\Big),\quad a.s..
\end{eqnarray}
By (\ref{equ-29}) in Hypothesis H, we arrive at
\begin{eqnarray*}
\tilde{J}(t)&=&\varepsilon\int^t_0\int_{(\mathbb{T}^N)^2}\int_{\mathbb{R}^2}\alpha \sum_{k\geq 1}|g_k(x,\xi)-g_k(y,\zeta)|^2d\nu^{1,\varepsilon}_{x,s}\otimes \nu^{2,\varepsilon}_{y,s}(\xi,\zeta)dxdyds\\
&\leq& \varepsilon D_1\int^t_0\int_{(\mathbb{T}^N)^2}\rho_{\gamma}(x-y)|x-y|^2\int_{\mathbb{R}^2}\psi_{\delta}(\xi-\zeta)d\nu^{1,\varepsilon}_{x,s}\otimes \nu^{2,\varepsilon}_{y,s}(\xi,\zeta)dxdyds\\
&& +\varepsilon D_1\int^t_0\int_{(\mathbb{T}^N)^2}\int_{\mathbb{R}^2}\rho_{\gamma}(x-y)\psi_{\delta}(\xi-\zeta)|\xi-\zeta|^2d\nu^{1,\varepsilon}_{x,s}\otimes \nu^{2,\varepsilon}_{y,s}(\xi,\zeta)dxdyds\\
&=:& \tilde{J}_{1}(t)+\tilde{J}_{2}(t).
\end{eqnarray*}
Note that
\begin{eqnarray*}
\int_{\mathbb{R}^2}\psi_{\delta}(\xi,\zeta)d\nu^{1,\varepsilon}_{x,s}\otimes \nu^{2,\varepsilon}_{y,s}(\xi,\zeta)&\leq& \delta^{-1}, \quad a.s.,
\\
\int_{(\mathbb{T}^N)^2}\rho_{\gamma}(x-y)|x-y|^2dxdy&\leq&\gamma^2,
\end{eqnarray*}
it follows that
\begin{eqnarray}\label{e-12}
\tilde{J}_{1}(t)\leq \varepsilon D_1\delta^{-1}\gamma^2, \quad a.s..
\end{eqnarray}
Referring to (35) in \cite{D-V-1}, it yields
\begin{eqnarray} \notag
\tilde{J}_{2}&\leq& \varepsilon \delta D_1 \int^t_0\int_{(\mathbb{T}^N)^2}\int_{|\xi-\zeta|\leq \delta}\rho_{\gamma}(x-y)\psi_{\delta}(\xi-\zeta)|\xi-\zeta|d\nu^{1,\varepsilon}_{x,s}\otimes \nu^{2,\varepsilon}_{y,s}(\xi,\zeta) dxdyds\\
\label{e-11}
&\leq& \varepsilon \delta D_1C_{\psi}, \quad a.s.,
\end{eqnarray}
where $C_{\psi}:=\sup_{\xi\in \mathbb{R}}\|\psi(\xi)\|$.
In view of (\ref{e-12}) and (\ref{e-11}), we arrive at
\begin{eqnarray*}
\tilde{J}(t)\leq \varepsilon D_1\delta^{-1}\gamma^2+\varepsilon D_1C_{\psi}\delta, \quad a.s..
\end{eqnarray*}
Combining all the above estimates, we conclude that
\begin{eqnarray}\notag
&&\int_{(\mathbb{T}^N)^2}\int_{\mathbb{R}^2}\rho_{\gamma} (x-y)\psi_{\delta}(\xi-\zeta)(f^{\pm}_1(x,t,\xi)\bar{f}^{\pm}_2(y,t,\zeta)+\bar{f}^{\pm}_1(x,t,\xi)f^{\pm}_2(y,t,\zeta))d\xi d\zeta dxdy\\
\notag
&\leq & \int_{(\mathbb{T}^N)^2}\int_{\mathbb{R}^2}\rho_{\gamma} (x-y)\psi_{\delta}(\xi-\zeta)(f_{1,0}(x,\xi)\bar{f}_{2,0}(y,\zeta)+\bar{f}_{1,0}(x,\xi)f_{2,0}(y,\zeta))d\xi d\zeta dxdy\\
\notag
&& +2\varepsilon \gamma^{-1}\mathcal{N}(q_0)C(q_0)(1+|\delta|^{q_0+1})+ \varepsilon D_1\delta^{-1}\gamma^2+\varepsilon D_1C_{\psi}\delta\\
\label{e-20}
&& +2\varepsilon \gamma^{-1}\mathcal{N}(q_0)C(q_0)\Big(\underset{0\leq s\leq t}{{\rm{ess\sup}}}\ \|u^{\varepsilon}(s)\|^{q_0+1}_{L^{q_0+1}(\mathbb{T}^N)}+\underset{0\leq s\leq t}{{\rm{ess\sup}}}\ \|v^{\varepsilon}(s)\|^{q_0+1}_{L^{q_0+1}(\mathbb{T}^N)}\Big)+\tilde{K}(t), \quad a.s..
\end{eqnarray}
For any $s\in (0,1)$, denote by
\[
R(s):=\int_{(\mathbb{T}^N)^2}\int_{\mathbb{R}^2}\rho_{\gamma} (x-y)\psi_{\delta}(\xi-\zeta)(f^{\pm}_1(x,s,\xi)\bar{f}^{\pm}_2(y,s,\zeta)+\bar{f}^{\pm}_1(x,s,\xi)f^{\pm}_2(y,s,\zeta))d\xi d\zeta dxdy.
\]
Then, we deduce from (\ref{e-20}) that
\begin{eqnarray*}
\underset{0\leq s\leq t}{{\rm{ess\sup}}}\ R(s)&\leq&  \int_{\mathbb{T}^N}\int_{\mathbb{R}}(f_{1,0}\bar{f}_{2,0}+\bar{f}_{1,0}f_{2,0})d\xi dx+\mathcal{E}_0(\gamma,\delta)\\
&&+2\varepsilon \gamma^{-1}\mathcal{N}(q_0)C(q_0)(1+|\delta|^{q_0+1})+ \varepsilon D_1\delta^{-1}\gamma^2+\varepsilon D_1C_{\psi}\delta\\
&& +2\varepsilon \gamma^{-1}\mathcal{N}(q_0)C(q_0)\Big(\underset{0\leq s\leq t}{{\rm{ess\sup}}}\ \|u^{\varepsilon}(s)\|^{q_0+1}_{L^{q_0+1}(\mathbb{T}^N)}+\underset{0\leq s\leq t}{{\rm{ess\sup}}}\ \|v^{\varepsilon}(s)\|^{q_0+1}_{L^{q_0+1}(\mathbb{T}^N)}\Big)\\
&& +\sup_{0\leq s\leq t}|\tilde{K}|(s), \quad a.s.,
\end{eqnarray*}
where $\lim_{\gamma,\delta\rightarrow 0}\mathcal{E}_0(\gamma,\delta)=0$.

Further, by H\"{o}lder inequality, it gives that
\begin{eqnarray}\notag
\left(\mathbb{E}\Big|\underset{0\leq s\leq t}{{\rm{ess\sup}}}\ R(s)\Big|^p\right)^{\frac{1}{p}}
&\lesssim & \int_{\mathbb{T}^N}\int_{\mathbb{R}}(f_{1,0}\bar{f}_{2,0}+\bar{f}_{1,0}f_{2,0})d\xi dx+\mathcal{E}_0(\gamma,\delta)\\
\notag
&& +2\varepsilon \gamma^{-1}\mathcal{N}(q_0)C(q_0)(1+|\delta|^{q_0+1})+ \varepsilon D_1\delta^{-1}\gamma^2+\varepsilon D_1C_{\psi}\delta\\
\label{e-1}
&& +2\varepsilon \gamma^{-1}\mathcal{N}(q_0)C(q_0)\mathcal{R} +\left(\mathbb{E}\Big|\sup_{s\in [0,t]}|\tilde{K}(s)|\Big|^p\right)^{\frac{1}{p}},
\end{eqnarray}
where
\begin{eqnarray*}
\mathcal{R}:=\sup_{0<\varepsilon\leq 1}\left\{\Big(\mathbb{E}\underset{0\leq s\leq 1}{{\rm{ess\sup}}}\ \|u^{\varepsilon}(s)\|^{p(q_0+1)}_{L^{p(q_0+1)}(\mathbb{T}^N)}\Big)^{\frac{1}{p}}+\Big(\mathbb{E}\underset{0\leq s\leq 1}{{\rm{ess\sup}}}\ \|v^{\varepsilon}(s)\|^{p(q_0+1)}_{L^{p(q_0+1)}(\mathbb{T}^N)}\Big)^{\frac{1}{p}}\right\}.
\end{eqnarray*}
Based on (\ref{qq-27}) and (\ref{qq-28}), we have
\begin{eqnarray}\label{qq-29-1}
\mathcal{R}<+\infty.
\end{eqnarray}

To estimate the stochastic integral term, we will use the following remarkable result from \cite{B-Y,Davis} that there exists a universal constant $C_0$ such that, for any $p\geq 2$ and for any continuous martingale $M_t$ with $M_0=0$,
\begin{eqnarray}\label{eq-10-1}
\mathbb{E}(|M^*_t|^p)\leq C^{\frac{p}{2}}_0 p^{\frac{p}{2}}\mathbb{E}\langle M\rangle^{\frac{p}{2}}_t,
\end{eqnarray}
where $M^*_t=\sup_{s\in [0,t]}|M_s|$.

Utilizing (\ref{eq-10-1}), we derive that
\begin{eqnarray}\label{e-7}
&&\mathbb{E}\Big|\sup_{s\in [0,t]}|\tilde{K}|(s)\Big|^p\\ \notag
&=& \varepsilon^{\frac{p}{2}}\mathbb{E}\Big|\sup_{s\in [0,t]}\sum_{k\geq 1}\int^s_0\int_{(\mathbb{T}^N)^2}\int_{\mathbb{R}^2}\chi_1(\xi,\zeta) \rho_{\gamma}(x-y)(g_k(x,\xi)-g_{k}(y,\zeta)) d \nu^{1,\varepsilon}_{x,r}\otimes \nu^{2,\varepsilon}_{y,r}(\xi,\zeta)dxdyd\beta_k(r)\Big|^p\\
\notag
&\leq& \varepsilon^{\frac{p}{2}}p^{\frac{p}{2}}C^{\frac{p}{2}}_0\mathbb{E}\Big[\int^t_0\sum_{k\geq 1}\Big|\int_{(\mathbb{T}^N)^2}\int_{\mathbb{R}^2}|g_k(x,\xi)-g_k(y,\zeta)|\rho_{\gamma}(x-y)\chi_1(\xi,\zeta) d \nu^{1,\varepsilon}_{x,r}\otimes \nu^{2,\varepsilon}_{y,r}(\xi,\zeta)dxdy\Big|^2dr\Big]^{\frac{p}{2}}.
\end{eqnarray}
Recall (\ref{e-6}) in Hypothesis H, it gives that
\[
|g_k(x,\xi)-g_k(y,\zeta)|\leq C^1_k(|x-y|+|\xi-\zeta|),\quad \sum_{k\geq 1}|C^1_k|^2\leq \frac{D_1}{2}:=D_2,
\]
hence, by (\ref{e-7}), we deduce that
\begin{eqnarray*}
&&\mathbb{E}\Big|\sup_{s\in [0,t]}|\tilde{K}|(s)\Big|^p\\
&\leq& \varepsilon^{\frac{p}{2}}p^{\frac{p}{2}}C^{\frac{p}{2}}_0D^{\frac{p}{2}}_2\mathbb{E}\Big[\int^t_0\Big|\int_{(\mathbb{T}^N)^2}\int_{\mathbb{R}^2}(|x-y|+|\xi-\zeta|)\rho_{\gamma}(x-y)\chi_1(\xi,\zeta) d \nu^{1,\varepsilon}_{x,r}\otimes \nu^{2,\varepsilon}_{y,r}(\xi,\zeta)dxdy\Big|^2dr\Big]^{\frac{p}{2}}.
\end{eqnarray*}
Since $\chi_1(\xi,\zeta)\leq 1$, it yields
\begin{eqnarray}\notag
\int_{(\mathbb{T}^N)^2}\int_{\mathbb{R}^2}|x-y|\rho_{\gamma}(x-y) \chi_1(\xi,\zeta)d \nu^{1,\varepsilon}_{x,r}\otimes \nu^{2,\varepsilon}_{y,r}(\xi,\zeta)dxdy\leq \gamma, \quad a.s..
\end{eqnarray}
Taking into account that $\nu^{1,\varepsilon}_{x,r}(\xi)=\delta_{u^{\varepsilon}(x,r)=\xi}$,  $\nu^{2,\varepsilon}_{y,r}(\zeta)=\delta_{v^{\varepsilon}(y,r)=\zeta}$, and by Corollary \ref{cor-1}, it follows that
\begin{eqnarray}\notag
&&\mathbb{E}\Big|\sup_{s\in [0,t]}|\tilde{K}|(s)\Big|^p\\ \notag
&\leq& \varepsilon^{\frac{p}{2}}p^{\frac{p}{2}}C^{\frac{p}{2}}_0D^{\frac{p}{2}}_2\mathbb{E}\Big[\int^t_0\Big|\gamma+\int_{(\mathbb{T}^N)^2}|u^{\varepsilon}-v^{\varepsilon}|\rho_{\gamma}(x-y) dxdy\Big|^2dr\Big]^{\frac{p}{2}}\\
\label{e-24}
&=&\varepsilon^{\frac{p}{2}}p^{\frac{p}{2}}C^{\frac{p}{2}}_0D^{\frac{p}{2}}_2\mathbb{E}\Big[\int^t_0\Big|\gamma+\int_{(\mathbb{T}^N)^2}|u^{\varepsilon,\pm}-v^{\varepsilon,\pm}|\rho_{\gamma}(x-y) dxdy\Big|^2dr\Big]^{\frac{p}{2}}.
\end{eqnarray}
With the help of the following identities
\begin{eqnarray}\label{e-4}
\int_{\mathbb{R}}I_{u^{\varepsilon,\pm}>\xi}\overline{I_{v^{\varepsilon,\pm}>\xi}}d\xi=(u^{\varepsilon,\pm}-v^{\varepsilon,\pm})^+,
\quad
\int_{\mathbb{R}}\overline{I_{u^{\varepsilon,\pm}>\xi}}I_{v^{\varepsilon,\pm}>\xi}d\xi=(u^{\varepsilon,\pm}-v^{\varepsilon,\pm})^-,
\end{eqnarray}
we deduce that
\begin{eqnarray}\notag
&& \int_{(\mathbb{T}^N)^2}|u^{\varepsilon,\pm}(x,r)-v^{\varepsilon,\pm}(y,r)|\rho_{\gamma}(x-y) dxdy\\ \notag
&=& \int_{(\mathbb{T}^N)^2}\Big((u^{\varepsilon,\pm}(x,r)-v^{\varepsilon,\pm}(y,r))^++(u^{\varepsilon,\pm}(x,r)-v^{\varepsilon,\pm}(y,r))^{-}\Big)\rho_{\gamma}(x-y) dxdy\\
\notag
&=&\int_{(\mathbb{T}^N)^2}\int_{\mathbb{R}}(\bar{f}^{\pm}_1(x,r,\xi)f^{\pm}_2(y,r,\xi)+f^{\pm}_1(x,r,\xi)\bar{f}^{\pm}_2(y,r,\xi))\rho_{\gamma}(x-y)d \xi  dxdy\\ \notag
&\leq& 4\delta+\int_{(\mathbb{T}^N)^2}\int_{\mathbb{R}^2}(\bar{f}^{\pm}_1(x,r,\xi)f^{\pm}_2(y,r,\zeta)+f^{\pm}_1(x,r,\xi)\bar{f}^{\pm}_2(y,r,\zeta))\rho_{\gamma}(x-y)\psi_{\delta}(\xi-\zeta) d \xi d\zeta dxdy\\
\label{e-25}
&=& 4\delta+R(r), \quad a.s.,
\end{eqnarray}
where we have used (\ref{e-23}) and (\ref{e-22}).
Combining (\ref{e-24}) and (\ref{e-25}), we deduce that for $0< t<1$, it holds that
\begin{eqnarray}\notag
\mathbb{E}\Big|\sup_{s\in [0,t]}|\tilde{K}(s)|\Big|^p
&\leq& \varepsilon^{\frac{p}{2}}p^{\frac{p}{2}}C^{\frac{p}{2}}_0D^{\frac{p}{2}}_2\mathbb{E}\Big[\int^t_0\Big|\gamma+4\delta+R(r)\Big|^2dr\Big]^{\frac{p}{2}}\\
\notag
&\leq & \varepsilon^{\frac{p}{2}}p^{\frac{p}{2}}C^{\frac{p}{2}}_0D^{\frac{p}{2}}_22^{\frac{p}{2}}\mathbb{E}\Big[\int^t_0\Big|\gamma+4\delta\Big|^2dr+\int^t_0|R(r)|^2dr\Big]^{\frac{p}{2}}
\\
\label{e-2}
&\leq &
\varepsilon^{\frac{p}{2}}p^{\frac{p}{2}}C^{\frac{p}{2}}_0D^{\frac{p}{2}}_2 2^p|\gamma+4\delta|^p+\varepsilon^{\frac{p}{2}}p^{\frac{p}{2}}C^{\frac{p}{2}}_0D^{\frac{p}{2}}_22^p\mathbb{E}\Big(\int^t_0R^2(r)dr\Big)^{\frac{p}{2}}.
\end{eqnarray}
Then, it follows  from (\ref{e-1}) and (\ref{e-2}) that
\begin{eqnarray*}\notag
&&\left(\mathbb{E}\Big|\underset{0\leq s\leq t}{{\rm{ess\sup}}}\ R(s)\Big|^p\right)^{\frac{1}{p}}\\
&\lesssim & \int_{\mathbb{T}^N}\int_{\mathbb{R}}(f_{1,0}\bar{f}_{2,0}+\bar{f}_{1,0}f_{2,0})d\xi dx+\mathcal{E}_0(\gamma,\delta)+2\varepsilon \gamma^{-1}\mathcal{N}(q_0)C(q_0)(1+|\delta|^{q_0+1})\\
&&+ \varepsilon D_1\delta^{-1}\gamma^2+\varepsilon D_1C_{\psi}\delta+2\varepsilon \gamma^{-1}\mathcal{N}(q_0)C(q_0)\mathcal{R}+\varepsilon^{\frac{1}{2}}p^{\frac{1}{2}}C^{\frac{1}{2}}_0D^{\frac{1}{2}}_2
|\gamma+4\delta|\\ \notag
&& +\varepsilon^{\frac{1}{2}}p^{\frac{1}{2}}C^{\frac{1}{2}}_0D^{\frac{1}{2}}_2\Big(\mathbb{E}\Big(\int^t_0R^2(r)dr\Big)^{\frac{p}{2}}\Big)^{\frac{1}{p}}.
\end{eqnarray*}
For any $p\geq 2$, by Minkowski's integral inequality, it holds that
\begin{eqnarray*}
\left(\mathbb{E}\Big[\int^t_0R^2(r)dr\Big]^{\frac{p}{2}}\right)^{\frac{1}{p}}
&=& \Big[\Big(\mathbb{E}(\int^t_0R^2(r)dr)^{\frac{p}{2}}\Big)^{\frac{2}{p}}\Big]^{\frac{1}{2}}\\
&\leq& \Big[\int^t_0\Big(\mathbb{E}R^p(r)\Big)^{\frac{2}{p}}dr\Big]^{\frac{1}{2}}\\
&\leq& \Big[\int^t_0\Big(\mathbb{E}\Big|\underset{0\leq s\leq r}{{\rm{ess\sup}}}\ R(s)\Big|^p\Big)^{\frac{2}{p}}dr\Big]^{\frac{1}{2}}.
\end{eqnarray*}
Thus, we reach
\begin{eqnarray}\notag
&&\Big(\mathbb{E}|\underset{0\leq s\leq t}{{\rm{ess\sup}}}\ R(s)|^p\Big)^{\frac{2}{p}}\\ \notag
&\leq & \mathcal{D}^2 \Big[\int_{\mathbb{T}^N}\int_{\mathbb{R}}(f_{1,0}\bar{f}_{2,0}+\bar{f}_{1,0}f_{2,0})d\xi dx+\mathcal{E}_0(\gamma,\delta)+2\varepsilon \gamma^{-1}\mathcal{N}(q_0)C(q_0)(1+|\delta|^{q_0+1})\\ \notag
&&+ \varepsilon D_1\delta^{-1}\gamma^2+\varepsilon D_1C_{\psi}\delta+2\varepsilon \gamma^{-1}\mathcal{N}(q_0)C(q_0)\mathcal{R}+\varepsilon^{\frac{1}{2}}p^{\frac{1}{2}}C^{\frac{1}{2}}_0D^{\frac{1}{2}}_2
|\gamma+4\delta|\Big]^2\\
\label{e-8}
&& +\mathcal{D}^2 \varepsilon pC_0D_2\int^t_0\left(\mathbb{E}\Big|\underset{0\leq s\leq r}{{\rm{ess\sup}}}\ R(s)\Big|^p\right)^{\frac{2}{p}}dr,
\end{eqnarray}
{\color{r}where $\mathcal{D}$ is defined in section 2.}
Let $G(t):=\left(\mathbb{E}\Big|\underset{0\leq s\leq t}{{\rm{ess\sup}}}\ R(s)\Big|^p\right)^{\frac{2}{p}}$,
applying Gronwall inequality to (\ref{e-8}), we get
\begin{eqnarray}\notag
G(t)&\leq& \mathcal{D}^2e^{\mathcal{D}^2\varepsilon pC_0D_2}\Big[\int_{\mathbb{T}^N}\int_{\mathbb{R}}(f_{1,0}\bar{f}_{2,0}+\bar{f}_{1,0}f_{2,0})d\xi dx+\mathcal{E}_0(\gamma,\delta)+2\varepsilon \gamma^{-1}\mathcal{N}(q_0)C(q_0)(1+|\delta|^{q_0+1})\\
\label{e-9}
&&  \quad \quad + \varepsilon D_1\delta^{-1}\gamma^2+\varepsilon D_1C_{\psi}\delta+2\varepsilon \gamma^{-1}\mathcal{N}(q_0)C(q_0)\mathcal{R}+\varepsilon^{\frac{1}{2}}p^{\frac{1}{2}}C^{\frac{1}{2}}_0D^{\frac{1}{2}}_2
|\gamma+4\delta|\Big]^2,
\end{eqnarray}
which implies that
\begin{eqnarray}\notag
&&\left(\mathbb{E}\Big|\underset{0\leq s\leq 1}{{\rm{ess\sup}}}\ R(s)\Big|^p\right)^{\frac{1}{p}}\\ \notag
&\lesssim& e^{\mathcal{D}^2\varepsilon pC_0D_2}\Big[\int_{\mathbb{T}^N}\int_{\mathbb{R}}(f_{1,0}\bar{f}_{2,0}+\bar{f}_{1,0}f_{2,0})d\xi dx+\mathcal{E}_0(\gamma,\delta)+2\varepsilon \gamma^{-1}\mathcal{N}(q_0)C(q_0)(1+|\delta|^{q_0+1})\\
\label{qq-10}
&& +\varepsilon D_1\delta^{-1}\gamma^2+\varepsilon D_1C_{\psi}\delta+2\varepsilon \gamma^{-1}\mathcal{N}(q_0)C(q_0)\mathcal{R}+\varepsilon^{\frac{1}{2}}p^{\frac{1}{2}}C^{\frac{1}{2}}_0D^{\frac{1}{2}}_2
|\gamma+4\delta|\Big].
\end{eqnarray}
Recall the definition of $R(s)$, it holds that
 \begin{eqnarray}\notag
&&\left(\mathbb{E}\Big|\underset{0\leq s\leq 1}{{\rm{ess\sup}}}\ \int_{(\mathbb{T}^N)^2}\int_{(\mathbb{R})^2}\rho_{\gamma}(x-y)\psi_{\gamma}(\xi-\zeta)(f^{\pm}_1(s,x,\xi)\bar{f}^{\pm}_2(s,y,\zeta)+\bar{f}^{\pm}_1(s,x,\xi){f}^{\pm}_2(s,y,\zeta))d\xi d\zeta dxdy\Big|^p\right)^{\frac{1}{p}}\\ \notag
&\lesssim& e^{\mathcal{D}^2\varepsilon pC_0D_2}\Big[\int_{\mathbb{T}^N}\int_{\mathbb{R}}(f_{1,0}\bar{f}_{2,0}+\bar{f}_{1,0}f_{2,0})d\xi dx+\mathcal{E}_0(\gamma,\delta)+2\varepsilon \gamma^{-1}\mathcal{N}(q_0)C(q_0)(1+|\delta|^{q_0+1})\\
\label{qq-11}
&&+ \varepsilon D_1\delta^{-1}\gamma^2+\varepsilon D_1C_{\psi}\delta+2\varepsilon \gamma^{-1}\mathcal{N}(q_0)C(q_0)\mathcal{R}+\varepsilon^{\frac{1}{2}}p^{\frac{1}{2}}C^{\frac{1}{2}}_0D^{\frac{1}{2}}_2
|\gamma+4\delta|\Big].
\end{eqnarray}
Applying the same procedure to $f^{\pm}_2$ and $\bar{f}^{\pm}_2$ (in this case, $A=0$ and $\int_{\mathbb{T}^N}\int_{\mathbb{R}}(f_{2,0}\bar{f}_{2,0}+\bar{f}_{2,0}f_{2,0})d\xi dx=0$), we obtain
\begin{eqnarray*}
&&\left(\mathbb{E}\Big|\underset{0\leq s\leq 1}{{\rm{ess\sup}}}\ \int_{(\mathbb{T}^N)^2}\int_{(\mathbb{R})^2}\rho_{\gamma}(x-y)\psi_{\gamma}(\xi-\zeta)(f^{\pm}_2(s,x,\xi)\bar{f}^{\pm}_2(s,y,\zeta)+\bar{f}^{\pm}_2(s,x,\xi){f}^{\pm}_2(s,y,\zeta))d\xi d\zeta dxdy\Big|^p\right)^{\frac{1}{p}}\\
&\lesssim& e^{\mathcal{D}^2\varepsilon pC_0D_2}\Big[\mathcal{E}_0(\gamma,\delta)+\varepsilon D_1\delta^{-1}\gamma^2+\varepsilon D_1C_{\psi}\delta+\varepsilon^{\frac{1}{2}}p^{\frac{1}{2}}C^{\frac{1}{2}}_0D^{\frac{1}{2}}_2
|\gamma+4\delta|\Big].
\end{eqnarray*}
For the sake of convenience, denote by
\[
Q(s):=\int_{(\mathbb{T}^N)^2}\int_{(\mathbb{R})^2}\rho_{\gamma}(x-y)\psi_{\gamma}(\xi-\zeta)(f^{\pm}_2(s,x,\xi)\bar{f}^{\pm}_2(s,y,\zeta)+\bar{f}^{\pm}_2(s,x,\xi){f}^{\pm}_2(s,y,\zeta))d\xi d\zeta dxdy,
\]
then, it yields
\begin{eqnarray}\label{qq-12}
\left(\mathbb{E}\Big|\underset{0\leq s\leq 1}{{\rm{ess\sup}}}\ Q(s)\Big|^p\right)^{\frac{1}{p}}
\lesssim e^{\mathcal{D}^2\varepsilon pC_0D_2}\Big[\mathcal{E}_0(\gamma,\delta)+\varepsilon D_1\delta^{-1}\gamma^2+\varepsilon D_1C_{\psi}\delta+\varepsilon^{\frac{1}{2}}p^{\frac{1}{2}}C^{\frac{1}{2}}_0D^{\frac{1}{2}}_2
|\gamma+4\delta|\Big].
\end{eqnarray}
On the other hand, from (\ref{qq-3}), it follows that
\begin{eqnarray}\notag
&&\left(\mathbb{E} \Big|\underset{0\leq s\leq 1}{{\rm{ess\sup}}}\ \int_{\mathbb{T}^N}\int_{\mathbb{R}}(f^{\pm}_1(s,x,\xi)\bar{f}^{\pm}_2(s,x,\xi)+\bar{f}^{\pm}_1(s,x,\xi){f}^{\pm}_2(s,x,\xi))d\xi dx\Big|^p\right)^{\frac{1}{p}}\\ \label{eee-1}
&\lesssim& \left(\mathbb{E}\Big|\underset{0\leq s\leq 1}{{\rm{ess\sup}}}\ |\mathcal{E}_s(\gamma,\delta)|\Big|^p\right)^{\frac{1}{p}}+\left(\mathbb{E}\Big|\underset{0\leq s\leq 1}{{\rm{ess\sup}}}\ R(s)\Big|^p\right)^{\frac{1}{p}}.
\end{eqnarray}
{\color{rr}
In the following, we devote to making estimates of $ \left(\mathbb{E}\Big|\underset{0\leq s\leq 1}{{\rm{ess\sup}}}\ |\mathcal{E}_s(\gamma,\delta)|\Big|^p\right)^{\frac{1}{p}}$.
For any $s\in (0,1)$, we have
\begin{eqnarray*}
\mathcal{E}_s(\gamma, \delta)&=&\int_{(\mathbb{T}^N)^2}\int_{\mathbb{R}^2}(f^{\pm}_1(x,s,\xi)\bar{f}^{\pm}_2(y,s,\zeta)+\bar{f}^{\pm}_1(x,s,\xi){f}^{\pm}_2(y,s,\zeta))\rho_{\gamma}(x-y)\psi_{\delta}(\xi-\zeta)dxdyd\xi d\zeta
\\
&&-\int_{\mathbb{T}^N}\int_{\mathbb{R}}(f^{\pm}_1(x,s,\xi)\bar{f}^{\pm}_2(x,s,\xi)+\bar{f}^{\pm}_1(x,s,\xi){f}^{\pm}_2(x,s,\xi))d\xi dx\\
&=& \Big[\int_{(\mathbb{T}^N)^2}\int_{\mathbb{R}}\rho_{\gamma}(x-y)(f^{\pm}_1(x,s,\xi)\bar{f}^{\pm}_2(y,s,\xi)+\bar{f}^{\pm}_1(x,s,\xi){f}^{\pm}_2(y,s,\xi))d\xi dxdy\\
&& -\int_{\mathbb{T}^N}\int_{\mathbb{R}}(f^{\pm}_1(x,s,\xi)\bar{f}^{\pm}_2(x,s,\xi)+\bar{f}^{\pm}_1(x,s,\xi){f}^{\pm}_2(x,s,\xi))d\xi dx\Big]\\
&& +\Big[\int_{(\mathbb{T}^N)^2}\int_{\mathbb{R}^2}(f^{\pm}_1(x,s,\xi)\bar{f}^{\pm}_2(y,s,\zeta)+\bar{f}^{\pm}_1(x,s,\xi){f}^{\pm}_2(y,s,\zeta))\rho_{\gamma}(x-y)\psi_{\delta}(\xi-\zeta)dxdyd\xi d\zeta\\
&& -\int_{(\mathbb{T}^N)^2}\int_{\mathbb{R}}\rho_{\gamma}(x-y)(f^{\pm}_1(x,s,\xi)\bar{f}^{\pm}_2(y,s,\xi)+\bar{f}^{\pm}_1(x,s,\xi){f}^{\pm}_2(y,s,\xi))d\xi dxdy \Big]\\
&=:&H_1+H_2,
\end{eqnarray*}
By (\ref{e-23}) and (\ref{e-22}), it gives
\begin{eqnarray}\label{qq-15}
|H_2|\leq 4\delta, \quad a.s..
\end{eqnarray}
Moreover, it is easy to deduce that
\begin{eqnarray*}
|H_1|&\leq& \Big|\int_{(\mathbb{T}^N)^2}\rho_{\gamma}(x-y)\int_{\mathbb{R}}I_{\bar{u}^{\varepsilon,\pm}(x,s)>\xi}(I_{v^{\varepsilon,\pm}(x,s)\leq \xi}-I_{v^{\varepsilon}(y,s)\leq \xi})d\xi dxdy\Big|\\
&& +\Big|\int_{(\mathbb{T}^N)^2}\rho_{\gamma}(x-y)\int_{\mathbb{R}}I_{\bar{u}^{\varepsilon,\pm}(x,s)\leq\xi}(I_{v^{\varepsilon,\pm}(x,s)> \xi}-I_{v^{\varepsilon,\pm}(y,s)> \xi})d\xi dxdy\Big|\\
&\leq& 2\int_{(\mathbb{T}^N)^2}\rho_{\gamma}(x-y)|v^{\varepsilon,\pm}(x,s)-v^{\varepsilon,\pm}(y,s)|dxdy, \quad a.s..
\end{eqnarray*}
Utilizing (\ref{e-23}) and (\ref{e-22}) again, it follows that
\begin{eqnarray*}
&&\int_{(\mathbb{T}^N)^2}\rho_{\gamma}(x-y)|v^{\varepsilon,\pm}(x,s)-v^{\varepsilon,\pm}(y,s)|dxdy\\
&=& \int_{(\mathbb{T}^N)^2}\int_{\mathbb{R}}\rho_{\gamma}(x-y)(f^{\pm}_2(x,s,\xi)\bar{f}^{\pm}_2(y,s,\xi)+\bar{f}^{\pm}_2(x,s,\xi){f}^{\pm}_2(y,s,\xi))d\xi dxdy\\
&\leq& \int_{(\mathbb{T}^N)^2}\int_{\mathbb{R}^2}\rho_{\gamma}(x-y)\psi_{\delta}(\xi-\zeta)(f^{\pm}_2(x,s,\xi)\bar{f}^{\pm}_2(y,s,\zeta)+\bar{f}^{\pm}_2(x,s,\xi){f}^{\pm}_2(y,s,\zeta))d\xi d\zeta dxdy+4\delta\\
&=&Q(s)+4\delta, \quad a.s..
\end{eqnarray*}
 Then,
\begin{eqnarray}\label{qq-14}
|H_1|\leq 2Q(s)+8\delta, \quad a.s..
\end{eqnarray}
Collecting (\ref{qq-15}) and (\ref{qq-14}), it yields
\begin{eqnarray*}
|\mathcal{E}_s(\gamma, \delta)|\leq 2Q(s)+12\delta, \quad a.s.,
\end{eqnarray*}
hence, by (\ref{qq-12}), we deduce that
\begin{eqnarray}\notag
&&\Big(\mathbb{E}\big|\underset{0\leq s\leq 1}{{\rm{ess\sup}}}\ |\mathcal{E}_s(\gamma,\delta)|\big|^p\Big)^{\frac{1}{p}}\\ \notag
&\lesssim& \Big(\mathbb{E}|\underset{0\leq s\leq 1}{{\rm{ess\sup}}}\ Q(s)|^p\Big)^{\frac{1}{p}}+\delta\\
\label{qq-16}
&\lesssim&e^{\mathcal{D}^2\varepsilon pC_0D_2}\Big[\mathcal{E}_0(\gamma,\delta)+\varepsilon D_1\delta^{-1}\gamma^2+\varepsilon D_1C_{\psi}\delta+\varepsilon^{\frac{1}{2}}p^{\frac{1}{2}}C^{\frac{1}{2}}_0D^{\frac{1}{2}}_2
|\gamma+4\delta|\Big]+\delta.
\end{eqnarray}
}
Combining (\ref{qq-10}) and (\ref{qq-16}), we deduce from (\ref{eee-1}) that
\begin{eqnarray*}\notag
&&\left(\mathbb{E} \Big|\underset{0\leq s\leq 1}{{\rm{ess\sup}}}\ \int_{\mathbb{T}^N}\int_{\mathbb{R}}(f^{\pm}_1(s,x,\xi)\bar{f}^{\pm}_2(s,x,\xi)+\bar{f}^{\pm}_1(s,x,\xi){f}^{\pm}_2(s,x,\xi))d\xi dx\Big|^p\right)^{\frac{1}{p}}\\ \notag
&\lesssim&e^{\mathcal{D}^2\varepsilon pC_0D_2}\Big[\int_{\mathbb{T}^N}\int_{\mathbb{R}}(f_{1,0}\bar{f}_{2,0}+\bar{f}_{1,0}f_{2,0})d\xi dx+2\mathcal{E}_0(\gamma,\delta)+2\varepsilon \gamma^{-1}\mathcal{N}(q_0)C(q_0)(1+|\delta|^{q_0+1})\\
&& +2\varepsilon D_1\delta^{-1}\gamma^2+2\varepsilon D_1C_{\psi}\delta+2\varepsilon \gamma^{-1}\mathcal{N}(q_0)C(q_0)\mathcal{R}+2\varepsilon^{\frac{1}{2}}p^{\frac{1}{2}}C^{\frac{1}{2}}_0D^{\frac{1}{2}}_2
|\gamma+4\delta|\Big]+\delta.
\end{eqnarray*}
Note that we have $f^{\pm}_1=I_{u^{\varepsilon,\pm}>\xi}$ and $f^{\pm}_2=I_{v^{\varepsilon,\pm}>\xi}$ with initial data $f_{1,0}=I_{\eta>\xi}$ and ${f}_{2,0}=I_{\eta>\xi}$, respectively. With the help of (\ref{e-4}),
we deduce that
\begin{eqnarray}\label{e-26}
\left(\mathbb{E}\Big|\underset{0\leq s\leq 1}{{\rm{ess\sup}}}\ \|u^{\varepsilon,\pm}(s)-v^{\varepsilon,\pm}(s)\|_{L^1(\mathbb{T}^N)}\Big|^p\right)^{\frac{1}{p}}
\lesssim r(\varepsilon,p, \gamma, \delta),
\end{eqnarray}
where
\begin{eqnarray}\notag
&&r(\varepsilon,p, \gamma, \delta)\\ \notag
&:=&e^{\mathcal{D}^2\varepsilon pC_0D_2}\Big[\int_{\mathbb{T}^N}\int_{\mathbb{R}}(f_{1,0}\bar{f}_{2,0}+\bar{f}_{1,0}f_{2,0})d\xi dx+2\mathcal{E}_0(\gamma,\delta)+2\varepsilon \gamma^{-1}\mathcal{N}(q_0)C(q_0)(1+|\delta|^{q_0+1})\\
\label{e-10}
&& +2\varepsilon D_1\delta^{-1}\gamma^2+2\varepsilon D_1C_{\psi}\delta+2\varepsilon \gamma^{-1}\mathcal{N}(q_0)C(q_0)\mathcal{R}+2\varepsilon^{\frac{1}{2}}p^{\frac{1}{2}}C^{\frac{1}{2}}_0D^{\frac{1}{2}}_2
|\gamma+4\delta|\Big]+\delta.
\end{eqnarray}
Taking
\[
\delta=\gamma=\varepsilon^{\frac{1}{2}},
\]
and
letting $p=\frac{1}{\varepsilon}$, by (\ref{qq-5}) and (\ref{qq-29-1}), we have
\begin{eqnarray*}
r(\varepsilon,p, \gamma, \delta)&=&e^{\mathcal{D}^2C_0D_2} \Big[2\mathcal{E}_0(\gamma,\delta)+2\mathcal{N}(q_0)C(q_0) \varepsilon^{\frac{1}{2}}(1+\varepsilon^{\frac{q_0+1}{2}})+2D_1\varepsilon^{\frac{3}{2}} +2D_1C_{\psi}\varepsilon^{\frac{3}{2}}\\
&& +2\varepsilon^{\frac{1}{2}}\mathcal{N}(q_0)C(q_0)\mathcal{R}+10\varepsilon^{\frac{1}{2}}C^{\frac{1}{2}}_0D^{\frac{1}{2}}_2\Big]+\varepsilon^{\frac{1}{2}}\\
&\rightarrow& 0, \quad {\rm{as}}\ \ \varepsilon\rightarrow 0.
\end{eqnarray*}
Therefore, we deduce from (\ref{e-26}) that
\begin{eqnarray}\notag
\left(\mathbb{E}\Big|\underset{0\leq s\leq 1}{{\rm{ess\sup}}}\ \|u^{\varepsilon,\pm}(s)-v^{\varepsilon,\pm}(s)\|_{L^1(\mathbb{T}^N)}\Big|^p\right)^{\frac{1}{p}}
\rightarrow 0, \quad {\rm{as}}\ \ \varepsilon\rightarrow 0.
\end{eqnarray}
By using Chebyshev inequality and (\ref{e-26}), for any $\iota>0$, we deduce that
\begin{eqnarray*}
&&\varepsilon \log \mathbb{P}\Big(\|u^{\varepsilon}-v^{\varepsilon}\|_{L^1([0,1];L^1(\mathbb{T}^N))}>\iota\Big)\\
&\leq& \varepsilon \log \Big[\mathbb{E}\Big( \|u^{\varepsilon,\pm}-v^{\varepsilon,\pm}\|^p_{L^1([0,1];L^1(\mathbb{T}^N))}\Big)/{\iota}^p\Big]\\
&\leq& -\log \iota+\log\left[\left(\mathbb{E} \Big|\underset{0\leq s\leq 1}{{\rm{ess\sup}}}\ \|u^{\varepsilon,\pm}(s)-v^{\varepsilon,\pm}(s)\|_{L^1(\mathbb{T}^N)}\Big|^p\right)^{\frac{1}{p}}\right]\\
&\rightarrow& -\infty, \quad {\rm{as}}\ \ \varepsilon\rightarrow 0.
\end{eqnarray*}
We complete the proof.
\end{proof}
%
%
%
%
%

\noindent{\bf  Acknowledgements}\quad This work is partly supported by National Natural Science Foundation of China (No. 11931004,11801032,11971227), Key Laboratory of Random Complex Structures and Data Science, Academy of Mathematics and Systems Science, Chinese Academy of Sciences (No. 2008DP173182) and Beijing Institute of Technology Research Fund Program for Young Scholars.

\def\refname{ References}

\end{document}